\title[Hermitian-Yang-Mills connections on collapsing elliptically fibered $K3$ surfaces ]{Hermitian-Yang-Mills connections on collapsing elliptically fibered $K3$ surfaces}
\author[V. Datar] {Ved Datar}
 \address{Department of Mathematics, Indian Institute of Science, Bengaluru, India}
 \email{vvdatar@iisc.ac.in}
\author[A. Jacob]{Adam Jacob}
\address{Department of Mathematics, UC Davis,  One Shields Ave, Davis, CA}
 \email{ajacob@math.ucdavis.edu}
\theoremstyle{plain}
\newtheorem{thm}{Theorem}[section]
\newtheorem{prop}[thm]{Proposition}
\newtheorem{defn}[thm]{Definition}
\newtheorem{lem}[thm]{Lemma}
\newtheorem{cor}[thm]{Corollary}
\theoremstyle{defn}
\numberwithin{equation}{section}
\flushbottom \thispagestyle{empty} \pagestyle{plain}
\renewcommand{\thanks}[1]{\footnote{#1}} 
\newcommand{\be}{\begin{equation}}
\newcommand{\bea}{\begin{eqnarray}}
\newcommand{\eea}{\end{eqnarray}} \newcommand{\ee}{\end{equation}}
 \def\ba{\begin{eqnarray}}
\def\ea{\end{eqnarray}}
\def\cL{{\mathcal L}}
\def\cS{{\mathcal S}}
\def\ra{\rightarrow}
\def\al{\alpha}
\def\b{\beta}
\def\ti{\tilde}
\def\i{\infty}
\def\ra{\rightarrow}
\def\cO{{\mathcal O}}
\def\cO{{\mathcal O}}
\def\cI{{\mathcal I}}
\def\cW{{\mathcal W}}
\def\i{{\mathbf{i}}}
\def\[{{\bf [}}
\def\]{{\bf ]}}
\def\pl{\partial}
\begin{document}
\begin{abstract}

Let $X\rightarrow {\mathbb P}^1$ be an elliptically fibered $K3$ surface, admitting a sequence $\omega_{i}$ of Ricci-flat metrics collapsing the fibers. Let $ V$ be a holomorphic $SU(n)$ bundle over $X$, stable with respect to $\omega_i$. Given the corresponding sequence $\Xi_i$ of Hermitian-Yang-Mills connections on $ V$, we prove that, if $E$ is a generic fiber, the restricted sequence $\Xi_i|_{E}$  converges to a flat connection $A_0$. Furthermore, if the restriction $V|_E$ is of the form $\oplus_{j=1}^n\mathcal O_E(q_j-0)$ for $n$ distinct points $q_j\in E$, then these points uniquely determine $A_0$.

\end{abstract}

\maketitle

\begin{normalsize}

\section{Introduction}

In this paper we study degenerations of Hermitian-Yang-Mills connections on a $K3$ surface. We are motivated by the work of Gross-Wilson \cite{GW}, and later  Gross-Tosatti-Zhang \cite{GTZ, GTZ2}, who study  Ricci-flat metrics on  elliptically fibered Calabi-Yau's as the volume of the fibers tends to zero (see also \cite{HT, To1, To2, TZ}). These types of degenerations relate to the conjectural picture of mirror symmetry put forth by Strominger-Yau-Zaslow \cite{SYZ}, who postulate that mirror Calabi-Yau manifolds are given by dual  torus fibrations over a real base with a singular affine structure. One major challenge when confronting this conjecture is the difficulty associated with constructing Lagrangian torus fibrations on a given Calabi-Yau. However, if one instead considers degenerations of Calabi-Yau's, the fibration structure often becomes apparent in the limit. 

Vafa's extension of the mirror symmetry conjecture to include holomorphic bundles raises the question of how Yang-Mills connections behave under these degenerations  \cite{V}. 
For a general abelian fibered Calabi-Yau, Fukaya makes the following conjecture: Given a sequence of Yang-Mills connections on a family of Calabi-Yau metrics with collapsing fibers, there exists a rectifiable set in the base of real codimension at least 2, such that on all fibers away from this set, the connections have bounded curvature, and  the restriction to each torus fiber converges to a flat connection \cite[Conjecture 5.5]{Fuk1}. In the context of SYZ mirror symmetry, a fiberwise flat connection will define a Lagrangian submanifold $\cL$ in the mirror Calabi-Yau,  and Fukaya further conjectures there exists a corresponding mirror sequence of Lagrangians converging to $\cL$. In this paper we partially address the vector bundle portion of Fukaya's conjecture, in the case of a fixed holomorphic $SU(n)$ bundle over a $K3$ surface.

Our setup is as follows. Let $\pi:X\rightarrow {\mathbb P}^1$ be an elliptic $K3$ surface. Let $\omega_{\mathbb P^1}$ be  a K\"ahler form on ${\mathbb P}^1$, and $\omega_X$ a K\"ahler form on $X$. Consider the family $\pi^*\omega_{\mathbb P^1}+t^2 \omega_X$, which approaches the boundary of the K\"ahler cone as $t\rightarrow 0$, and let $\omega_t$ be the unique Ricci-flat metric in the class $[\pi^*\omega_{\mathbb P^1}+t^2 \omega_X]$ given by Yau's theorem \cite{Yau}.
Next, let $( V,\bar\partial_{\Xi })$ be a holomorphic $SU(n)$ bundle over $X$, with a fixed  metric $H_0$. Assume there exists a sequence $t_i\rightarrow 0$ such that the bundle $V$ is stable with respect to $\omega_{t_i}$. By the theorem of  Donaldson, Uhlenback-Yau \cite{Don1, UY}, there exists a corresponding sequence of connections $\Xi_i$ solving  the Hermitian-Yang-Mills (HYM) equations: 
\be
F_{\Xi_i}\wedge \omega_{t_i}=0\qquad{\rm and}\qquad (F_{\Xi_i})^{0,2}=0.\nonumber
\ee
Furthermore, each $\Xi_i$ is complex gauge equivalent to $\Xi $, so they define the same holomorphic structure (see \eqref{complexifiedgauge}). We now state our main result:

 \begin{thm}
 \label{maintheorem}With the set-up as above:
\begin{enumerate}
\item There exists a finite subset $Z\subset \mathbb{P}^1$, such that for any $x\in \mathbb{P}^1\setminus Z$, if $E = \pi^{-1}(x)$, then the restriction $\Xi_i\big|_{E}$ converges smoothly, along a subsequence and modulo unitary gauge transformations, to a flat connection on the fiber.

\item Furthermore, if the restriction $V|_E$ is isomorphic to a direct sum of line bundles $\oplus_{j=1}^n\mathcal O_E(q_j-0)$ for $n$ distinct points $q_j\in E$, then the limiting flat connection is uniquely determined, and given by
 \be
 \label{limit}
 A_0=\frac{\pi}{{\rm Im}(\tau)} \left({\rm diag}\{\bar q_1,...,\bar q_n\}dz-{\rm diag}\{q_1,...,q_n\}d\bar z\right),
 \ee
 where $z$ is the holomorphic coordinate on the fiber, and $\tau$ determines the complex structure. In this case we also have the following convergence:
 \be
 ||\Xi_i|_E-A_0||_{L^2_1(E,H_0,g_0,A_0)}\rightarrow 0.\nonumber
 \ee
Here $g_0$ is a flat reference metric on $E$, and the flat connection $A_0$ is used to compute derivatives. 
\end{enumerate}
  \end{thm}

Note that in the second point above, no gauge transformations are needed for convergence. The first point follows from a bubbling argument. Our sequence of connections  has bounded Yang-Mills energy, thus there can only be a finite number of bubbles, and we show away from these bubbles the curvature of  $\Xi_i|_{E}$ must approach zero in the $C^0$ norm. This step closely follows  two   cases from   Dostoglou-Salamon, from their proof of the Atiyah-Floer conjecture \cite{DS, DS2}. The key difference here is that our Ricci flat metric $\omega_{t_i}$ is not a product metric, so we rely on certain convergence results for $\omega_{t_i}$. 

It then follows that $\Xi_i|_{E}$  must  approach some limiting flat connection, and the  main contribution of this paper is the explicit identification of the limit, under the assumption that  the restriction of the holomorphic bundle $V|_E$ is isomorphic to a direct sum of line bundles $\oplus_{j=1}^n\mathcal O_E(q_j-0)$. We   use the observation that, because our sequence of connections $\Xi_i$ all define a fixed holomorphic structure,  there exists a sequence of Hermitian endomorphisms $s_i$ satisfying $\Xi_i|_E=e^{s_i}( A_0)$ (where this action is defined by \eqref{complexifiedgauge}). Although there is no hope of achieving $C^0$ control of $s_i$, we prove a gauge fixing result, and demonstrate that there exists a suitable normalization $s'_i$, defining the same connection, which in addition satisfies a uniform $C^0$ bound. This  significant step  is detailed in Theorem \ref{gaugef}, which in particular hinges on a Poincar\'e inequality \eqref{awesome1}, where the explicit form of $V|_E$ is used. From here, convergence of $\Xi_i|_E$ to $A_0$ stated in Theorem \ref{maintheorem} follows from standard theory.

Next we turn to a specific  geometric setup where Theorem \ref{maintheorem} applies. Although this setup requires more assumptions, it has the benefit of producing explicit examples of bundles where the the limiting flat connection can be identified on a generic fiber. We now assume $\pi: X\rightarrow {\mathbb P}^1$ is a projective, elliptic $K3$ surface with a section $\sigma$ and singular fibers of type $I_1$ or $II$. Assume the restriction of $V$ to a generic fiber is semi-stable and regular (see Section \ref{bundles} for relevant defintions).  Then by the work of  Friedman-Morgan-Witten in \cite{FMW2},   there exists a divisor  $D_V\in |n\sigma(\mathbb P^1)+kl|$,
called the {\it spectral cover} associated to $V$, where $l$ denotes the effective divisor class of the fibes, and  $k\in\mathbb Z$ satisfies $0\leq k\leq c_2(V)$.  If $D_V$ is reduced and irreducible, then    $V$ is stable with respect to $\pi^*[\omega_{\mathbb P^1}]+t^2[\omega_X]$ for any ample class $[\omega_X]$ on $X$,  for  $0<t^2\leq(\frac{n^3}{4}c_2(V))^{-1}.$ Thus, for any sequence $t_i\rightarrow0$  we can always find a corresponding sequence of HYM connections $\Xi_i$ on $V$. More importantly, the intersection of the spectral cover $D_V$ with a generic fiber $E$ precisely picks out the points ${q_1,...,q_n}$ from \eqref{limit}, and so the limiting flat connection is uniquely determined away from the ramification points of $D_V$. 
\begin{cor}
\label{maincor}
Assume $\pi: X\rightarrow {\mathbb P}^1$ is a projective, elliptic $K3$ surface with a section, with singular fibers of type $I_1$ or $II$. Let $V$ be a holomorphic $SU(n)$ bundle where the restriction to a generic fiber is semi-stable and regular,  and assume the spectral cover $D_V$ is reduced and irreducible.  Then for any sequence  $t_i\rightarrow 0$, there exists  a sequence of HYM connections $\Xi_i$   on $V$ corresponding to $\omega_{t_i}$. Furthermore, away from a finite number of fibers, there exists a HYM connection $\Xi_0$ uniquely determined by $D_V$, satisfying $\Xi_0|_{E}=A_0$, where $A_0$ is defined via \eqref{limit}. Specifically, on a generic fiber $E$ the points ${q_1,...,q_n}$ defining $A_0$  are given by
\be
D_V\cap E=q_1+\cdots+q_n.\nonumber
\ee
On $E$  we again have the convergence
 \be
 ||\Xi_i|_E-\Xi_0|_E||_{L^2_1(E,H_0,g_0, A_0)}\rightarrow 0.\nonumber
 \ee
\end{cor}
The above setup is particularly attractive in that  it allows for us to specify the limiting flat connection in a family that varies homomorphically in the base. Thus our result   is a natural starting place to explore convergence in general, as opposed to only in the fiber direction.

Since adiabatic limits of Yang-Mills connections are fairly well studied, we now put our work in the  context of previous results. Working on the product of two  compact Riemann surfaces with trivial fibration $\pi:\Sigma_1\times\Sigma_2\rightarrow\Sigma_1$, J. Chen considers a family of metrics collapsing the fibers, and analyzes the convergence of a corresponding family of anti-self dual Yang-Mills connections \cite[Theorem 4.10]{Chen}. Assuming that $\Sigma_2$ has genus at least two, he proves that, modulo a sequence of gauge transformations and away from a bubbling set, the fiber component of the connection will converge continuously to a flat connection. Following this work, and using Fukaya's gauge fixing theorem  \cite[Theorem 1.7]{Fuk2}, T. Nishinou improves upon Chen's result, demonstrating smooth convergence away from a   finite number of fibers and modulo gauge transformations, where the restriction of the limit  to a fiber will be flat \cite[Theorem 1.2]{N1}. This result requires  the moduli space of flat connections over $\Sigma_2$ to be  smooth and of expected dimension, with no reducible flat connections. The failure of such an assumption to hold over an elliptic curve is a major obstacle to extending the above results  to elliptic fibrations. 

In the case of  $SU(2)$ bundles over the product of elliptic curves, in \cite{N3}  Nishinou is able to partially extend his above results, after utilizing his gauge fixing theorem from \cite[Theorem 3.11]{N2}. In some ways, our Theorem \ref{gaugef} can be thought of as a generalization to higher rank of this gauge fixing theorem, although   we have already assumed existence of a fixed holomorphic structure. In fact, this assumption serves as a major simplification throughout our paper, compared to the general case of a sequence of anti-self dual Yang-Mills connections considered in \cite{Chen, DS, Fuk2, N1,N3}. The most notable simplification is that, because our sequence of connections $\Xi_i$ are all complex gauge equivalent to $\Xi $, we can bypass working with a sequence of holomorphic maps (which plays a role in \cite{Chen, N1, N3}), as well as the more difficult type three bubbles of Dostoglou-Salamon \cite{DS, DS2}. Additionally, this assumption allows us to prove the convergence  in the second  point of Theorem \ref{maintheorem} directly, without relying on unitary gauge transformations.

Although our main result only applies to the restriction of $\Xi_i$ to each fiber, one may hope to demonstrate convergence on any compact set away from a finite number of fibers. Nishinou achieves this in \cite{N1} and \cite{N3}, as his assumptions allow a Poincar\'e type inequality in a neighborhood of a fiber, even in the elliptic curve case (this follows from the estimate in Lemma 6.43 from \cite{Fuk2}). This estimate implies that once  $C^0$ control of the complexified gauge transformation is demonstrated on one fiber, it holds for nearby fibers. Unfortunately we are unable to extend Lemma 6.43 from \cite{Fuk2} to our setting, as our Poincare inequality (Proposition \ref{poincare}) requires a normalization that only holds fiberwise. Another result in this direction is proven by Fu in \cite{Fu}, who considers a specific rank two bundle over the product of  two elliptic curves which is given by a two sheeted spectral cover. He defines a reference metric which satisfies desired asymptotic behavior near the ramification points of the cover, and then demonstrates that the a sequence of HYM metrics will converge smoothly to this reference metric.  Because in our setting we consider a spectral cover as well,   one may hope to extend Fu's result to the K3 surface case. Here, one major difficulty is the problem of constructing a reference metric near the singularities of the fibration. It is possible that the asymptotics of  the metrics constructed in \cite{CJY2, Simp2, Simp3} may provide a clue, and we hope to investigate these types of constructions in future work.

Finally, we remark that after an earlier draft of this paper appeared, building on this work, the authors, along with Y. Zhang,  were able to demonstrate convergence of $\Xi_i$ on compact sets away from a finite number of fibers, under the assumptions of Corollary \ref{maincor}.  We direct the reader to \cite{DJZ} for details.

Our paper is organized as follows.  In Section \ref{semiflat} we describe in detail semi-flat K\"ahler metrics on a K3 surface, which serve as a local model for our degenerating Ricci-flat metrics away from the singular fibers. Next in Section \ref{bundles} we introduce the necessary background on holomorphic vector bundles over elliptic fibrations,  and state some preliminary results. Our bubbling argument in described Section \ref{bubs}. We then turn to identifying the limiting flat connection, and prove our gauge fixing result is  Section \ref{gaugefixingsection}. In Section \ref{convo} we complete the proof of our main theorem, and demonstrate convergence of our connections.

\vspace{\baselineskip}
\noindent
{\bf Acknowledgements} We would like to thank T. Nishinou, for  providing  a revised draft of \cite{N3}, and P. Feehan for introducing us to reference \cite{F}. We also thank the referee for many valuable suggestions. V. Datar is partially supported by NSF RTG grant DMS-1344991 and A. Jacob is partially supported by a grant from the Hellman Foundation, and a Simons collaboration grant.

\section{Semi-flat K\"ahler metrics}
\label{semiflat}

In this section, we review   the construction of semi-flat K\"ahler metrics on a K3 surface. These metrics will not only  describe the limiting behavior of the Ricci-flat metrics  in dilated coordinates, and thus play a role in our bubbling argument, but they will also be useful for our understanding of the holomorphic structure   of $ V$. To begin,  we  introduce the notion of a special K\"ahler metric, which lives on the base of  our elliptic fibration, and are a useful starting place to defining the semi-flat metric.  We will closely follow the paper of Freed \cite{Freed}.

Let $B$ be a Riemannian manifold of real dimension two.  Assume $TB$ admits a flat, torsion free connection $\nabla^B$, which gives a covering of $B$ by local affine coordinate charts. Furthermore assume the coordinate transformations lie in $SL(2,{\mathbb R})$. Let $(x^1,x^2)$ be coordinates in a local chart, and let $ \phi_{ij}dx^idx^j$ be a Hessian metric solving the real Monge-Ampere equation
\be
\label{MA}
{\rm det}(\phi_{ij})=1.
\ee
Here we use the notation $ \phi_{ij}:=\frac{\partial^2\phi}{\partial x^i\partial x^j}$ for a smooth function $\phi$ on $B$. We also denote $\frac{\partial \phi}{\partial x^i}$ by $\phi_i$.

Consider the locally defined 2-form $\omega_B= dx^1\wedge dx^2.$ Because $ SL(2,{\mathbb R})= Sp(2,{\mathbb R})$,  any matrix $A\in SL(2,{\mathbb R})$ preserves $\omega_B$, and thus $\omega_B$ is well defined on all of $B$. It defines both a natural symplectic form and a volume form. Furthermore, $\nabla^B\omega_B=0$, and so $\nabla^B$ is a symplectic connection.  Taken together, $\omega_B$ and $\phi_{ij}$ define an almost complex structure $I$ on $TB$, which in coordinates can be expressed by
\be
I\left(\frac{\pl}{\pl x^k}\right)=-\phi_{k2}\frac{\pl}{\pl x^1}+\phi_{k1}\frac{\pl}{\pl x^2},\nonumber
\ee
for $k=1,2$. One can show explicitly that Nijenhuis tensor of $I$ vanishes and thus it is integrable. 
\begin{defn}
 $(B,\omega_B, I)$ is special K\"ahler if it admits a real, flat, torsion-free, symplectic connection $\nabla^B$ satisfying
\be
d_{\nabla^B} I=0.\nonumber
\ee
\end{defn}
To see that $B$ is special K\"ahler, note in affine coordinates the flat connection $\nabla^B$ is simply given by $d$, and so
\be
\frac{\pl}{\pl x^k}I^p{}_q -\frac{\pl}{\pl x^q}I^p{}_k =0,\nonumber
\ee
which follows because $\phi_{ij}$ is a Hessian metric.

Given the complex structure $I$, we can give holomorphic coordinate functions on $B$. 
\begin{lem}
\label{holosemiflat}
The functions \be
w=x^1+\i\phi_2\qquad{\rm and}\qquad \xi=-x^2+\i\phi_1.\nonumber
\ee
are holomorphic with respect to the complex structure $I$. 
\end{lem}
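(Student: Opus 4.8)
The plan is to verify holomorphicity through the standard criterion that a complex-valued function $f$ on $(B,I)$ is holomorphic if and only if its differential is of type $(1,0)$, that is, $df(IV)=\i\,df(V)$ for every real tangent vector $V$. Equivalently, writing $\bar\partial_I f=\tfrac12\big(df+\i\,I^*df\big)$ with $(I^*df)(V):=df(IV)$, it suffices to show $\bar\partial_I w=\bar\partial_I\xi=0$. Since every object in the statement is written in the affine coordinates $(x^1,x^2)$, the entire verification reduces to linear algebra in the coframe $\{dx^1,dx^2\}$ together with the given action of $I$ on the coordinate basis.

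First I would compute the differentials explicitly. Using $\phi_i=\partial\phi/\partial x^i$, the identity $d\phi_i=\phi_{ij}\,dx^j$, and the symmetry $\phi_{12}=\phi_{21}$ of the Hessian, one obtains
\[
dw=(1+\i\phi_{12})\,dx^1+\i\phi_{22}\,dx^2,\qquad d\xi=\i\phi_{11}\,dx^1+(-1+\i\phi_{12})\,dx^2.
\]
Next I would insert the defining relations $I(\partial_1)=-\phi_{12}\partial_1+\phi_{11}\partial_2$ and $I(\partial_2)=-\phi_{22}\partial_1+\phi_{12}\partial_2$ and evaluate $dw$ and $d\xi$ on $I\partial_1$ and $I\partial_2$, checking the four scalar identities $dw(I\partial_k)=\i\,dw(\partial_k)$ and $d\xi(I\partial_k)=\i\,d\xi(\partial_k)$ for $k=1,2$.

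The organizing observation is that in these four checks the off-diagonal contributions cancel automatically by the symmetry of $\phi_{ij}$, while each diagonal check produces the combination $\phi_{11}\phi_{22}-\phi_{12}^2=\det(\phi_{ij})$. It is precisely here that the real Monge-Amp\`ere equation $\det(\phi_{ij})=1$ enters: substituting $\det(\phi_{ij})=1$ turns, for example, $dw(I\partial_1)=-\phi_{12}+\i(\phi_{11}\phi_{22}-\phi_{12}^2)$ into $-\phi_{12}+\i$, which is exactly $\i\,dw(\partial_1)=\i(1+\i\phi_{12})$. The three remaining identities close in the same way after the same substitution.

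There is accordingly no genuine analytic obstacle in this lemma; the content lies entirely in recognizing that the Monge-Amp\`ere normalization is doing the work, the very same normalization that already guarantees $I^2=-{\rm Id}$. The only point requiring care is bookkeeping: keeping the index conventions for $I^p{}_q$ consistent with the stated formula, and correctly identifying which terms are diagonal so that the $\det(\phi_{ij})$ factors are read off without sign errors. Once $\det(\phi_{ij})=1$ is inserted, the holomorphicity of both $w$ and $\xi$ follows immediately.
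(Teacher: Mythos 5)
Your proof is correct, and it is worth noting that the paper itself offers no argument for this lemma at all: it is simply recalled from Freed's paper \cite{Freed}. Your direct verification is therefore a genuine addition rather than a rederivation, and it is the natural one. The differentials $dw=(1+\i\phi_{12})\,dx^1+\i\phi_{22}\,dx^2$ and $d\xi=\i\phi_{11}\,dx^1+(-1+\i\phi_{12})\,dx^2$ are computed correctly, the criterion $\bar\partial_I f=\tfrac12\left(df+\i\,I^*df\right)=0$ is the right notion of holomorphicity, and the four evaluations close exactly as you describe: the two ``mixed'' checks ($dw$ against $I\partial_2$, $d\xi$ against $I\partial_1$) cancel using only the symmetry $\phi_{12}=\phi_{21}$, while the two ``diagonal'' checks produce $\phi_{11}\phi_{22}-\phi_{12}^2$ and require $\det(\phi_{ij})=1$; for instance $d\xi(I\partial_2)=-\phi_{12}-\i\det(\phi_{ij})=-\phi_{12}-\i=\i\,d\xi(\partial_2)$. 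Your remark that this is the same normalization forcing $I^2=-{\rm Id}$ is also accurate, since $I^2=-\det(\phi_{ij})\,{\rm Id}$ in these coordinates. One small point worth making explicit if this were to be written out: annihilation by $\bar\partial_I$ gives holomorphicity as a coordinate function because $B$ has real dimension two, where integrability of $I$ is automatic (the paper also notes the vanishing of the Nijenhuis tensor just before the lemma), and $w$ has nowhere-vanishing differential since ${\rm Re}\,dw=dx^1$.
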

\begin{proof}
Taking the exterior derivative gives $dw=(1+\i\phi_{12})dx^1+\i\phi_{22}dx^2$ and $d\xi=\i \phi_{11}dx^1-(1-\i\phi_{12})dx^2$. By our explicit representation of $I$ it is easy to check that $I(dw)=\i dw$ and $I(d\xi)=\i d\xi$.
\end{proof}
For the remainder of the paper we choose $w$ as our holomorphic coordinate on the base. We would like to better understand the holomorphic vector field $\frac{\pl}{\pl w}$. First, note that the coordinate transformation $T(x^1, x^2)\ra(w, \bar w)$ has pushforward matrix
\be
T_*=\left( 
\begin{array}{cc}
1+\i\phi_{12} &\i\phi_{22}\\
1-\i\phi_{12}&-\i\phi_{22}\end{array} 
\right)\qquad{\rm and}\qquad T_*^{-1}=\frac12\left( 
\begin{array}{cc}
1 &1\\
-\i\frac{\phi_{11}}{1+\i\phi_{12}}&\i\frac{\phi_{11}}{1-\i\phi_{12}}\end{array} 
\right),\nonumber\ee
where we have used \eqref{MA}. We now compute the partial derivative 
\bea
\label{taudef}
\frac{\pl \xi}{\pl w}&=&\frac{\pl \xi}{\pl x^1}\frac{\pl x^1}{\pl w}+\frac{\pl \xi}{\pl x^2}\frac{\pl x^2}{\pl w}\nonumber\\
&=&(\i\phi_{11})\left(\frac12\right)+(-1+\i\phi_{12})\left(\frac{-\i}2\frac{\phi_{11}}{1+\i\phi_{12}}\right)\nonumber\\
&=&\frac\i2\phi_{11}\left(1+\frac{1-\i\phi_{12}}{1+\i\phi_{12}}\right)=\i\frac{\phi_{11}}{1+\i\phi_{12}}=\i\frac{1-\i\phi_{12}}{\phi_{22}},
\eea
where the last equality follows from \eqref{MA}.  For simplicity we will use the notation $\tau:=\frac{\pl \xi}{\pl w},$
 as $\tau$ will define the complex structure of our elliptic fibers. Then, by the explicit formula from Freed \cite[Equation (1.12)]{Freed}, we see
\be
\frac{\pl}{\pl w}=\frac12\left(\frac{\pl}{\pl x^1}-\tau\frac{\pl}{\pl x^2}\right).\nonumber
\ee

Next we construct a hyper-K\"ahler structure on $TB$. Quotienting out $TB$ by a lattice $\Lambda$ will give a local model for our elliptic fibration $X$ away from the singular fibers. 
To begin, consider the following  extension of  our Hessian metric to $TB$:
 \be
g=\phi_{ij}\left({dx^idx^j}+dy^i dy^j\right).\nonumber
\ee
With this metric we define three complex structures which make up a hyper-K\"ahler triple.

By a slight abuse of notation, let $I$ denote the complex structure on $TB$ induced from the complex structure $I$ on the base. In particular $I$ can be expressed as
\be
I\left(\frac{\pl}{\pl x^k}\right)=-\phi_{k2}\frac{\pl}{\pl x^1}+\phi_{k1}\frac{\pl}{\pl x^2}\qquad{\rm and}{\qquad }I\left(\frac{\pl}{\pl y^k}\right)=\phi_{k2}\frac{\pl}{\pl y^1}-\phi_{k1}\frac{\pl}{\pl y^2}\nonumber
\ee
for $k=1,2$. The corresponding K\"ahler form is given by
\be
\label{omega_I}
\omega_{I}= {dx^1\wedge dx^2}-dy^1\wedge dy^2.
\ee
In this complex structure the fibers are holomorphic subvarieties.

Next we consider a  complex structures $J $ where the fibers are special Lagrangian, defined by
\be
J \left(\frac{\pl}{\pl x^k}\right)= \frac{\pl}{\pl y^k}\qquad{\rm and}{\qquad }J \left(\frac{\pl}{\pl y^k}\right)= \frac{\pl}{\pl x^k}.\nonumber
\ee
Using the metric $g $ the corresponding K\"ahler form is
\be
\omega_J=\phi_{ij} dx^i\wedge dy^j.\nonumber
\ee

Finally, one can define  the complex structure  $K=JI$, which together with $g $ gives the K\"ahler form
\be
\omega_K=dx^1\wedge dy^2-dx^2\wedge dy^1.\nonumber
\ee
It is easy to see that $\omega_{I }$, $\omega_J$ and $\omega_K$ are closed, and by a lemma of Hitchin \cite{H} it follows that $I$, $J$, and $K$ are integrable complex structures. In the standard fashion we can construct the following top dimensional holomorphic forms:
\be
\Omega_I=\omega_K+\i\,\omega_J\nonumber
\ee
\be
\Omega_{J}=\omega_{I}+\i\,\omega_K\nonumber
\ee
\be
\Omega_{K}=\omega_{I}+\i\,\omega_J,\nonumber
\ee
giving hyper-K\"aher triple. Note the metric and complex structures defined above are invariant under translation in the $y-$coordinates. Thus if $\Lambda$ is the standard lattice $\langle 1,1\rangle$, the entire setup will descend to the elliptically fibered manifold $TB/\Lambda$.

We now construct complex coordinates on  $TB/\Lambda$. We have a complex coordinate $w$ on the base $B$, and in the fiber direction we define
\be
\label{complexcoord}
z=\tau y^1+y^2.
\ee
Here $\tau$ represents the complex period of the elliptic curve, and is defined above in \eqref{taudef}. By definition $\tau:=\frac{\pl \xi}{\pl w}$, and since $\xi$ is a holomorphic function, $\tau$ is holomorphic in $w$ as well. This leads to the following:
\begin{lem}
The coordinates $(w,z)$ are holomorphic coordinates on $TB/\Lambda$ with respect to  $I$.
\end{lem}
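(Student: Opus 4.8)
The plan is to use the standard criterion that a complex-valued function $f$ is holomorphic with respect to $I$ precisely when $df$ is of type $(1,0)$, equivalently when $df(IV)=\i\,df(V)$ for every real tangent vector $V$. Since $TB/\Lambda$ is four real dimensional with frame $\{\frac{\pl}{\pl x^1},\frac{\pl}{\pl x^2},\frac{\pl}{\pl y^1},\frac{\pl}{\pl y^2}\}$, it suffices to verify this identity for $f=w$ and $f=z$ on each of these four vector fields, using the explicit action of $I$ recorded above.

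First I would dispatch $w$. Since $w=x^1+\i\phi_2$ depends only on the affine base coordinates, $dw$ annihilates $\frac{\pl}{\pl y^k}$; as $I\big(\frac{\pl}{\pl y^k}\big)=\phi_{k2}\frac{\pl}{\pl y^1}-\phi_{k1}\frac{\pl}{\pl y^2}$ again lies in the vertical span, the identity $dw(IV)=\i\,dw(V)$ holds trivially on the two vertical fields. On the horizontal fields $\frac{\pl}{\pl x^k}$ it is exactly the statement that $w$ is $I$-holomorphic on the base, which is Lemma \ref{holosemiflat}. Hence $w$ is $I$-holomorphic on the total space.

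The substance is the computation for $z=y^2+\tau y^1$. The crucial preliminary observation is that $\tau=\tau(w)$ varies holomorphically in $w$, so $d\tau=\frac{\pl\tau}{\pl w}\,dw$ carries no $d\bar w$ component, and hence
\[
dz=\tau\,dy^1+dy^2+y^1\frac{\pl\tau}{\pl w}\,dw.
\]
On the horizontal fields only the last term survives, and $dz\big(I\tfrac{\pl}{\pl x^k}\big)=y^1\frac{\pl\tau}{\pl w}\,dw\big(I\tfrac{\pl}{\pl x^k}\big)=\i\,y^1\frac{\pl\tau}{\pl w}\,dw\big(\tfrac{\pl}{\pl x^k}\big)=\i\,dz\big(\tfrac{\pl}{\pl x^k}\big)$, again by the $I$-holomorphicity of $w$. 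On the vertical fields I would substitute $I\big(\frac{\pl}{\pl y^k}\big)=\phi_{k2}\frac{\pl}{\pl y^1}-\phi_{k1}\frac{\pl}{\pl y^2}$ to get $dz\big(I\tfrac{\pl}{\pl y^k}\big)=\tau\phi_{k2}-\phi_{k1}$, while $dz\big(\tfrac{\pl}{\pl y^k}\big)=\tau\delta_{1k}+\delta_{2k}$. The required identities are then $\tau\phi_{12}-\phi_{11}=\i\tau$ and $\tau\phi_{22}-\phi_{12}=\i$, both of which follow by inserting the explicit value of $\tau$ from \eqref{taudef} and using the Monge-Amp\`ere constraint $\det(\phi_{ij})=1$.

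Finally I would observe that $dw$ is purely horizontal while $dz$ has nonzero vertical part, so $dw\wedge dz\neq 0$ and $(w,z)$ genuinely form a local holomorphic coordinate system; and since translating $y^1$ or $y^2$ by a lattice generator shifts $z$ by $\tau$ or by $1$, the function $z$ is well defined modulo the period lattice $\mathbb{Z}+\tau\mathbb{Z}$ and so descends to $TB/\Lambda$. The only genuine obstacle is the pair of algebraic identities in the vertical directions, but these are routine consequences of \eqref{taudef} and $\det(\phi_{ij})=1$; everything else is formal.
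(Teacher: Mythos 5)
Your proof is correct, and while it reaches the same algebra as the paper, it is organized dually. You verify that $dw$ and $dz$ are $(1,0)$-forms by testing $df(IV)=\i\,df(V)$ on the four real coordinate fields, which in the fiber directions reduces to the two identities $\tau\phi_{12}-\phi_{11}=\i\tau$ and $\tau\phi_{22}-\phi_{12}=\i$; these do follow from \eqref{taudef} and $\det(\phi_{ij})=1$, e.g. $\tau(\phi_{12}-\i)=\i\phi_{11}\tfrac{\phi_{12}-\i}{1+\i\phi_{12}}=\phi_{11}$. The paper instead exhibits the single vertical vector field $\frac{\pl}{\pl\bar z}=\frac12\left(\frac{\pl}{\pl y^1}-\frac{\i\phi_{11}}{1+\i\phi_{12}}\frac{\pl}{\pl y^2}\right)$, checks $I\left(\frac{\pl}{\pl\bar z}\right)=-\i\frac{\pl}{\pl\bar z}$, and then shows $dw$ and $dz$ annihilate it, using that $\frac{\pl}{\pl\bar w}$ and $\frac{\pl}{\pl\bar z}$ span the $(0,1)$ fields; the same combination of \eqref{taudef} and the Monge-Amp\`ere constraint powers both computations. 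What your route buys is that you never need to guess the correct formula for $\frac{\pl}{\pl\bar z}$ (the paper's is asserted and then verified), and you record two points the paper leaves implicit: that $dw\wedge dz\neq 0$, so $(w,z)$ genuinely form a coordinate system, and that translation by the lattice shifts $z$ by $1$ or $\tau$, so $z$ is well defined on $TB/\Lambda$ modulo the period lattice. What the paper's route buys is a reusable object: the explicit $(0,1)$ frame field $\frac{\pl}{\pl\bar z}$, which it quotes again later (e.g. in defining the holomorphic structures $\bar\partial_q$ along the fibers).
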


\begin{proof}
Lemma \ref{holosemiflat} shows that $w$ is holomorphic, and so it remains to be seen that $dz(V)=0$ for all vector fields $V$ of type $(0,1)$. Taking the exterior derivative of $z$ gives
\be
\label{def z} 
dz=\tau dy^1+y^1\frac{\pl\tau}{\pl w}dw+dy^2,
\ee
where we used $\tau$ is holomorphic. At first glance the term $y^1\frac{\pl\tau}{\pl w}dw$ may seem out of place, however, it is important to remember that unless $\tau$ is constant, our local picture is not the cartesian product of the base with an elliptic curve, and so this term is expected.

Consider the vector field
\be
\frac{\pl}{\pl \bar z}=\frac12\left(\frac{\pl}{\pl y^1}-\frac{\i\phi_{11}}{1+\i\phi_{12}}\frac{\pl}{\pl y^2}\right).\nonumber
\nonumber
\ee
From the explicit form of $dz$, we see that in order for $\frac{\pl}{\pl \bar z}$ to be anti-holomorphic, it needs to be killed by the form $dw$ on the base.

Using the definition of $I$, and the fact that ${\rm det}(\phi_{ij})=1$, we compute
\bea
I\left(\frac{\pl}{\pl\bar z}\right)&=&\frac12\left( \phi_{12}-\frac{\i\phi_{11}\phi_{22}}{1+\i\phi_{12}}\right)\frac{\partial}{\partial y^1}+\frac12\left(-\phi_{11}+\frac{\i\phi_{11}\phi_{12}}{1+\i\phi_{12}} \right)\frac{\partial}{\partial y^2}\nonumber\\
&=&\frac12\left( -\i\right)\frac{\partial}{\partial y^1}+\frac12\left(-\frac{\phi_{11}}{1+\i\phi_{12}} \right)\frac{\partial}{\partial y^2}=-\i\frac{\pl}{\pl\bar z},\nonumber
\eea
which demonstrates this vector field is of type $(0,1)$. By Lemma \ref{holosemiflat} it follows that $dw\left(\frac{\pl}{\pl\bar z}\right)=0$. Additionally, we have
\be
(\tau dy^1+dy^2)\left(\frac{\pl}{\pl\bar z}\right)=\frac12(\tau-\tau)=0.\nonumber
\ee
So $dz(\frac{\pl}{\pl\bar z})=0$. Since $\frac{\pl}{\pl\bar w}$ and $\frac{\pl}{\pl\bar z}$ span all local $(0,1)$ vector fields, we conclude that $(w,z)$ are holomorphic coordinates.
\end{proof}

We conclude this section with a more detailed discussion of how the Ricci-flat K\"ahler metrics  behave in the limit. We recall our setup from the introduction. Let $\pi:X\rightarrow {\mathbb P}^1$ be an elliptic $K3$ surface, and denote by $Z_\pi$   the image of the singular fibers.  Let $\omega_{\mathbb P^1}$ be  the Fubini-Study metric on on ${\mathbb P}^1$, and $\omega_X$ a K\"ahler form on $X$. Let $\omega_t$ be the unique Ricci-flat metric in the class $[\pi^*\omega_{\mathbb P^1}+t^2 \omega_X]$. The convergence we need is local, so we fix a small, simply connected open set $U\subset \mathbb P^1$ away from $Z_\pi$, and define $X_U:=\pi^{-1}(U)$. On $U$ we can consider the K\"ahler form $\omega_B$ along with the Hessian metric $\phi_{ij}$, which on this small open set is equivalent to   $\omega_{\mathbb P^1}$.  On $X_U$ we have the fixed background metric $\omega_X$, but we also have the K\"ahler form $\omega_I$ as defined above, called the {\it semi-flat metric}, and we denote it by $\omega_I=:\omega_{SF}$ for emphasis.

We will need the following uniform equivalence result. By \cite[Lemma 4.1]{GTZ}, there exists a constant $C$ so that for $t$ small enough
\be
\label{equiv}
C^{-1}\left(\pi^*\omega_{\mathbb P^1}+t^2\,\omega_X\right)\leq\omega_t\leq C\left(\pi^*\omega_{\mathbb P^1}+t^2\,\omega_X\right).
\ee
We also need a result that demonstrates  how $\omega_t$ degenerates. Consider the projection $p:U\times \mathbb C\rightarrow (U\times \mathbb C)/\Lambda=:X_U$, and the coordinate transformation $L_t: U\times \mathbb C\rightarrow U\times \mathbb C$ defined by 
\be
\label{fiberscale}
L_t(x,y)=(x,\frac{y}{ t}).
\ee
This coordinate transformation is a dilatation, designed so that the size of the fibers of $\pi$ with respect to the metric $L_t^*p^*\omega_t$ are fixed.  We will use the following result of  Hein-Tosatti from \cite[Proof of Theorem 1.1]{HT}, which demonstrates how the semi-flat metric serves as a model for the limiting behavior of $\omega_t$.

\begin{prop}[Hein-Tosatti \cite{HT}] 
\label{HTbound}
There exists a constant $C$ so that for $t$ small enough
\be
C^{-1}p^*\left(\pi^*\omega_{B}+\omega_{SF}\right)\leq L_t^*p^*\omega_t\leq Cp^*\left(\pi^*\omega_{B}+\omega_{SF}\right).\nonumber
\ee
\end{prop}
This  estimate also appears in \cite{TZ}, with the extra assumption that $X$ is projective. For more details on convergence results, we direct the reader to \cite{GTZ,GTZ2}.

\section{Holomorphic bundles over elliptic manifolds}
\label{bundles}

In this section we provide the necessary background on holomorphic vector bundles, including the relevant notions of stability needed to construct our sequence of HYM connections. We also introduce the construction of a spectral cover associated to $V$, following Friedman-Morgan-Witten \cite{FMW2} (see also \cite{Friedman, FMW, Mo}), and conclude the section with the construction of $\Xi_0$ used in Corollary \ref{maincor}.

To begin, suppose $(X,\omega)$ is a compact K\"ahler manifold of complex dimension $m$. Let $( V,  \bar\partial_{\Xi })$ be a holomorphic bundle over $X$.  For any Hermitian metric $H_0$ on $ V$, there exists a unique connection, called the Chern connection, compatible with both the metric and the holomorphic structure, which we denote by $\Xi$.  The {\it degree} of $ V$ is defined by the following integral:
\be
{\rm deg}( V,\omega)=\i\int_X{\rm Tr}(F_{\Xi })\wedge\omega^{m-1}.\nonumber
\ee
Given two metrics on $ V$, the curvatures of the two corresponding Chern connections will differ by a $\partial\bar\partial$-exact term, demonstrating that the degree   is independent of a choice of metric. Furthermore the degree does not depend on the representative of the K\"ahler class $[\omega]$. However, for $m>1$, changing the class may change the degree.

\begin{defn}
$( V,\omega)$ is {\it stable} if, for all proper, torsion-free subsheaves $\mathcal F\subset E$, 
\be
\frac{{\rm deg}(\mathcal F,\omega)}{{\rm rk}(\mathcal F)}<\frac{{\rm deg}( V,\omega)}{{\rm rk}( V)}.\nonumber
\ee
$( V,\omega)$ is {\it semi-stable} if the above expression holds with a weak inequality. 
\end{defn}
Note that if $\mathcal F$ is not locally free, its degree is defined by computing  the degree of ${\rm det}(\mathcal F)$, which is always a line bundle.

On any complex manifold, the space of one forms decomposes into the eigenspaces for $\pm\i$ with respect to the complex structure. This allows us to write the any connection $\Xi$ as $\Xi=\Xi^{1,0}+\Xi^{0,1}$. Using this decomposition, one can define an action of the complexified gauge group on the space of connections. Specifically, if $\sigma\in GL( V)$, then
\be 
\label{complexifiedgauge}
\sigma (\Xi)=\sigma^*{}^{-1} \Xi^{1,0}  \sigma^*+\sigma^*{}^{-1}\partial \sigma^*+\sigma \Xi^{0,1} \sigma^{-1}-\bar\partial\sigma \sigma^{-1},
\ee
Note that if $\sigma$ is in fact unitary,  the above action reduces to the standard action of the unitary gauge group. In this case, we use the standard notation $u^*\Xi$ for the unitary action.

We now turn to the HYM equations on a general K\"ahler manifold:
\be
\label{HYM1}
\i F_{\Xi}\wedge \omega^{m-1}=\frac{m\,{\rm deg}(V,\omega)}{{\rm rk(V)}{\rm Vol}(X)} Id_V\,{\omega^m}\qquad{\rm and}\qquad (F_{\Xi})^{0,2}=0.
\ee
For any metric $g$ on $X$ and connection $\Xi$ on $ V$, the Yang-Mills energy is defined by the following integral
$$\mathcal{YM}(\Xi ,g) := \int_X|F_{\Xi }|^2_{H_0, g}\,dV_g. $$ 
Critical points of this energy functional are called Yang-Mills connections, and one can check using the K\"ahler identities that HYM connections are are special class of Yang-Mills connections which are compatible with the complex structure on $X$.

Note that second equation in \eqref{HYM1} stipulates that $\Xi$ is compatible with the holomorphic structure on $ V$. By definition this second equation is satisfied by the Chern connection $\Xi $, and one can check this compatibility  is preserved along the action \eqref{complexifiedgauge}. This leads to the following question: Given a holomorphic vector bundle with fixed metric $H_0$, does there exist a solution $\hat \Xi $ to  \eqref{HYM1} in the orbit of  \eqref{complexifiedgauge}? A definitive answer to this question was given by Donaldson, Uhlenbeck-Yau, in the following fundamental result.

\begin{thm}[Donaldson \cite{Don1}, Uhlenbeck-Yau \cite{UY}]
A holomorphic bundle $ V$ over $(X,\omega)$ admits a unique Hermitian-Yang-Mills connection in the complex gauge orbit of the Chern connection
if and only if it is stable.
\end{thm}
In fact, one can prove that if $\hat \Xi $ is the unique Hermitian-Yang-Mills connection, it can be expressed as $\hat \Xi =e^s (\Xi )$, where $s$ is a trace free Hermitian endomorphism of $ V$.

Given this background, we return to our   setup. Let $\pi:X\rightarrow\mathbb P^1$ be an elliptically fibered $K3$ surface, and  let $\omega_{t}$ be the unique Ricci-flat K\"ahler metric in the class $[\pi^*\omega_{\mathbb P^1}+t^2\,\omega_X]$. Assume $( V,\bar\partial_{\Xi})$ is a holomorphic $SU(n)$ bundle over $X$. This implies the curvature $F_{\Xi}$ is  trace free, and so ${\rm deg}( V, \omega_{t})=0$ for all $t$. Furthermore, assume that $( V,\bar\partial_{\Xi})$ is stable with respect to $\omega_{t_i}$ for some sequence $t_i\rightarrow 0$. Let $ g_{t_i}$ be the K\"ahler metrics associated to  $\omega_{t_i}$. By the theorem of Donaldson-Uhlenbeck-Yau there exists a corresponding sequence of HYM connections $\Xi_i$ on $ V$. Note that in our particular setting, the HYM equations take the simpler form
\be
F_{\Xi}\wedge \omega=0\qquad{\rm and}\qquad (F_{\Xi})^{0,2}=0.\nonumber
\ee

We will need the following Lemma, which states that the Yang-Mills energy of a HYM connection is a topological invariant. This result is standard, and can be found, for instance, in \cite{DK}. We include a proof for the reader's convenience. 
\begin{lem}
\label{YMcontrol}
The Yang-Mills energy of $\Xi_i$ with respect to the metric $g_{t_i}$, is fixed, i.e.
\be
\mathcal{YM}(\Xi_i,g_{t_i})=\mathcal{YM}(\Xi, g_{t_0}).\nonumber
\ee 
\end{lem}
\begin{proof}
Let $\i\Lambda_\omega$ denote the adjoint of wedging with the K\"ahler form $\omega$. Then the equation $ F_\Xi\wedge \omega=0 $ can be equivalently expressed as $\i\Lambda_\omega F_\Xi=0$. Equality (4.4.5) in \cite{Kob} shows that for any complex surface $X$ one has
\be
\int_X{\rm Tr}(F_{\Xi_i}\wedge F_{\Xi_i})=||F_{\Xi_i}||^2_{L^2(H_0,  g_{t_i})}-||\i\Lambda_{\omega_{t_i}}F_{\Xi_t}||^2_{L^2(H_0,g_{t_i})}.\nonumber
\ee
Since $F_{\Xi_i}$ is HYM with respect to $g_{t_i}$, the right most term vanishes, and so 
\be
\mathcal{YM}(\Xi_i, g_{t_i})=\int_X{\rm Tr}(F_{\Xi_i}\wedge F_{\Xi_i}).\nonumber
\ee 
The right hand side above yields a topological invariant $[c_2( V)-\frac12c_1^2( V)]\cup X$, and is thus independent of $i$, proving the lemma.
\end{proof}

We now review the Friedman-Morgan-Witten construction of stable holomorphic bundles on elliptic fibrations with sections, and is needed for Corollary \ref{maincor}. We begin by looking at a single fiber. Let $E$ be an elliptic curve, and $0\in E$ the identity of the group law. Denote the trivial line bundle by $\cO$, and given a point $q\in E$, let $\mathcal O_E(q-0)$ be the line bundle associated to the divisor $q-0$. We also define a sequence of rank $r$ bundles (denoted $\cI_r$) inductively, with $\cI_1=\cO$ and $\cI_r$ the unique nontrivial extension of $\cI_{r-1}$ by $\cO$. Recall the following theorem of Atiyah (Theorem 5 from \cite{Atiyah}):
\begin{thm}[Atiyah \cite{Atiyah}]
Any semi-stable, degree zero bundle $V$ over $E$ is isomorphic to a direct sum of bundles of the form $ \mathcal O_E(q-0)\otimes\cI_r$, i.e.
\be
\label{form}
V\cong\bigoplus_{j=1}^\ell  \mathcal O_E(q_j-0)\otimes\cI_{r_j}.\nonumber
\ee
\end{thm}

\begin{defn}
A semi-stable bundle is called regular if in the above direct sum  $q_i\neq q_j$ for $i\neq j$. 
\end{defn}
Note that bundles of the form $ \mathcal O_E(q_j-0)\otimes\cI_{r_j}$ do not admit flat connections unless $r_j=1$. However, we can instead replace $ \mathcal O_E(q_j-0)\otimes\cI_{r_j}$ with its Seshadri filtration $ \mathcal O_E(q_j-0)^{\oplus {r_j}}$, and define all bundles with the same Seshadri filtration to be $\cS$-equivalent. We then see the $\cS$-equivalence class of an $SU(n)$ bundle is determined by $n$ points $q_1,..., q_n$ (counted with multiplicities) satisfying $q_1+\cdots+q_n=0$.

Thus we can describe the moduli space of $\cS$-equivalence classes of $SU(n)$ bundles as follows. Let $W:=H^0(E,\cO(n0))$ be the space of meromorphic functions $\phi$ that have a pole of at most order $n$ at $0$, with no other poles. By Abel's Theorem $\phi$ must have $n$ zeros satisfying $q_1+\cdots+q_n=0$. If $\phi$ has a pole of order less than $n$ at $0$, we interpret this as some of the $q_i$ are $0$. The zeros of $\phi$ are preserved under multiplication by an element of ${\mathbb C}^*$, and so the moduli space is ${\mathbb P}W\cong{\mathbb P}^{n-1}$.

Next consider a projective elliptic fibration $\pi: X\rightarrow {\mathbb P}^1$, with singular fibers of type $I_1$ or $II$ (this extra assumption gives that $X$ coincides with its Weierstrass model). Over each generic point $x$ in the base there is an elliptic curve $E_x:=\pi^{-1}(x)$ and a moduli space ${\mathbb P}^{n-1}_x$ of $SU(n)$ bundles. Friedman-Morgan-Witten demonstrate that the projective spaces glue together to form a ${\mathbb P}^{n-1}$ bundle over the base, which we denote by $\cW$. A  holomorphic $SU(n)$ bundle $ V$ over $X$ which restricts to a semi-stable bundle on each fiber determines a section $s$ of $\cW$, which in turn defines a divsor $D_V\subset X$. Specifically, each point $x$ in the base determines $n$ points in $E_x$, thus $D_V$ is an n-fold ramified cover of ${\mathbb P}^1$. More precisely, in Section 4 of  \cite{FMW2} it is demonstrated:

\begin{thm}[Friedman-Morgan-Witten \cite{FMW2}] Let $\pi: X\rightarrow {\mathbb P}^1$ be an elliptic fibration with a section $\sigma$, with singular fibers of type $I_1$ or $II$. Let $V$ be a holomorphic bundle of rank $n$ over $X$. Assume that the restriction of $V$ to a generic fiber of $\pi$ is semi-stable and regular. Then there exists a divisor 
\be
D_V\in |n\sigma(\mathbb P^1)+kl|,\nonumber
\ee 
called the spectral cover associated to $V$, where $l$ denotes the effective divisor class of the fibers of $\pi$, $k\in\mathbb Z$ satisfies $0\leq k\leq c_2(V)$. For a generic $x\in \mathbb P^1\backslash Z_\pi$, if $V|_{E_x}\cong\bigoplus_{j=1}^\ell  \mathcal O_E(q_j-0)\otimes\cI_{r_j}$, then
\be
D_V\cap E_x=\sum_{j=1}^\ell r_jq_j\in|n\sigma(x)|.\nonumber
\ee
\end{thm}

If $V$ admits a spectral cover $D_V$ which is reduced and irreducible, then $D_V$ has a finite number of ramification points. Let $Z_D$ denote the image of these ramification point under $\pi$. Then for any $x\in \mathbb P^1\backslash (Z_\pi\cup Z_D)$, we have $D_V\cap E_x=\sum_{j=1}^n q_j$ with all $q_j$ distinct, and thus
\be
 V|_{E_x}\cong  \mathcal O_E(q_1-0)\oplus \cdots \oplus \mathcal O_E(q_n-0).\nonumber
\ee
This verifies the holomorphic structure assumption on $V|_{E_x}$ in Theorem \ref{maintheorem}.
Furthermore the points $q_j$   vary holomorphically in $x$. The condition that $D_V$ be  reduced and irreducible also guarantees that the bundle $V$ is stable with respect to $\omega_t$ for small $t$. This  can be used to construct many examples of connections $\Xi_i$ that satisfy the assumptions of our main theorem.
\begin{thm}[Theorem 7.4 in \cite{FMW2}]
If the spectral cover $D_V$ constructed above is reduced and irreducible, then $V$ is stable with respect to $\pi^*[\omega_{\mathbb P^1}]+t^2[\omega_X]$ for any ample class $[\omega_X]$ on $X$,  for all $0<t^2\leq(\frac{n^3}{4}c_2(V))^{-1}.$
\end{thm}

We end this section with the construction of $\Xi_0$ from Corollary \ref{maincor}, which is  a local $HYM$ connection that determines the limit $A_0=\Xi_0|_E$ on each fiber. Although the limiting connection $A_0$ is expressed in holomorphic coordinates in \eqref{limit}, here we find it easier to work with our  coordinates $(x^1,x^2, y^1, y^2)$ from the previous section. Both viewpoints are, of course, equivalent.

Consider  $X_U:=\pi^{-1}(U)$ for some simply connected $U\subset P^1\backslash (Z_\pi\cup Z_D)$. As a first step, we consider the case where $ V$ has rank one. Since it has degree zero, the bundle $ V$ is topologically trivial along each fiber and thus topologically trivial on $X_U$. We equip $ V$ with a trivial metric $H_0$, and fix a unitary frame. For a fiber $E_x$ we have assumed the restriction $V|_{E_x}\cong \mathcal O_E(q -0)$, with   $q$ varying holomorphically in the base. We decompose $q$ as follows
\be
\label{defofq}
q(x^1,x^2)=\theta_1(x^1,x^2)-\tau\theta_2(x^1,x^2).
\ee
Recall  $E_x$ is determined by the quotient $T_xB/\Lambda$, and so the point $q$ can be lifted to a point $\ti q$ in $T_xB\cong {\mathbb C}$. Now, if $\tau(x^1, x^2)$ gives the complex structure on $E_x$, we can define the holomorphic structure on $V$ by
\be
\bar\partial_q:=\bar\partial-\frac{\pi\,\ti q}{{\rm Im}(\tau)}\,d\bar z,\nonumber
\ee
where $z$ is the complex coordiante defined in \eqref{complexcoord}. At the end of the section we will demonstrate that the connections we construct are independent of the lift from $q$ to $\ti q$, and therefore well defined. 

Given the above holomorphic structure, and using that $H_0$ is the trivial metric, the Chern connection can be computed as
\be
\Xi_0=2\pi\i\,\left(\theta_1dy^1+\theta_2dy^2\right).\nonumber
\ee
Since each $\theta_j$ only depends on the base coordinates, $\Xi_0|_{E_x}$ is flat on each fiber. Using  \eqref{defofq}, one can check that the  expression for $\Xi_0|_{E_x}=:A_0$ is equivalent to \eqref{limit}  in the statement of the main theorem. The holonomy around each period in $E_x$ is given by  $e^{2\pi\i\,\theta_1}$ and  $e^{2\pi\i\,\theta_2}$, respectively.

\begin{prop}
\label{initialconnection}
The connection $\Xi_0$ is HYM with respect to all three complex structures $I, J $, and $K$, on $X_U$. 
\end{prop}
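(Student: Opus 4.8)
The plan is to show that the curvature $F_{d_\theta}$ is anti-self-dual with respect to the hyper-K\"ahler metric $g_s$, and then to exploit the fact that in complex dimension two a real, primitive $(1,1)$-form is exactly an anti-self-dual form, while anti-self-duality of $g_s$ is a condition symmetric among the three compatible complex structures $I$, $J_s$, $K_s$. Because $\theta_1,\theta_2$ depend only on the base coordinates $(x^1,x^2)$, the first step is the direct computation
\be
F_{d_\theta}=2\pi\i\sum_{i,j}(\partial_{x^i}\theta_j)\,dx^i\wedge dy^j,\nonumber
\ee
so that $F_{d_\theta}$ involves only the mixed monomials $dx^i\wedge dy^j$. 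This already gives $F_{d_\theta}\wedge\omega_{I,s}=0$: wedging a mixed term against either $dx^1\wedge dx^2$ or $dy^1\wedge dy^2$, the two pieces of $\omega_{I,s}$ in \eqref{omega_I}, repeats a $dx$ or a $dy$ and therefore vanishes.

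The substantive part is $F_{d_\theta}\wedge\omega_J=0$ and $F_{d_\theta}\wedge\omega_K=0$, and this is where the holomorphy of the spectral data is used. Since $q=\theta_1-\tau\theta_2$ varies holomorphically in $w$, we have $\partial_{\bar w}q=0$; writing $\partial_{\bar w}=\tfrac12(\partial_{x^1}-\bar\tau\,\partial_{x^2})$ and using that $\tau$ is holomorphic, this collapses to the single complex identity
\be
\partial_{x^1}\theta_1-\tau\,\partial_{x^1}\theta_2-\bar\tau\,\partial_{x^2}\theta_1+|\tau|^2\,\partial_{x^2}\theta_2=0.\nonumber
\ee
Its imaginary part is $\partial_{x^1}\theta_2=\partial_{x^2}\theta_1$, which is precisely the vanishing of the volume coefficient of $F_{d_\theta}\wedge\omega_K$. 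For the real part I would feed in the special-K\"ahler identities read off from \eqref{taudef}, namely $\tau=(\phi_{12}+\i)/\phi_{22}$, so that $\mathrm{Re}\,\tau=\phi_{12}/\phi_{22}$, $\mathrm{Im}\,\tau=1/\phi_{22}$, and, after invoking $\det(\phi_{ij})=1$, $|\tau|^2=\phi_{11}/\phi_{22}$. Clearing $\phi_{22}$ turns the real part into
\be
\phi_{22}\,\partial_{x^1}\theta_1-\phi_{12}(\partial_{x^1}\theta_2+\partial_{x^2}\theta_1)+\phi_{11}\,\partial_{x^2}\theta_2=0,\nonumber
\ee
which is exactly, up to an overall sign, the volume coefficient of $F_{d_\theta}\wedge\omega_J$ computed from $\omega_J=\phi_{ij}\,dx^i\wedge dy^j$. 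Hence both wedges vanish.

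Finally, for each fixed $s$ the forms $\omega_{I,s},\omega_J,\omega_K$ span the bundle of self-dual two-forms of $g_s$, so the three identities $F_{d_\theta}\wedge\omega_{I,s}=F_{d_\theta}\wedge\omega_J=F_{d_\theta}\wedge\omega_K=0$ say precisely that $F_{d_\theta}$ is orthogonal to $\Lambda^+$, i.e. anti-self-dual. On a K\"ahler surface a real anti-self-dual two-form is primitive of type $(1,1)$, so applying this once for each of $I$, $J_s$, $K_s$ yields $(F_{d_\theta})^{0,2}=0$ and $\Lambda_{\omega}F_{d_\theta}=0$ in all three complex structures, which is the HYM condition. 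The step I expect to be the real obstacle is the $\omega_J$ identity: it only reduces to the wanted form after simultaneously using the holomorphy relation for $q$ and the Monge--Amp\`ere normalization $\det(\phi_{ij})=1$, so the careful bookkeeping of the special-K\"ahler identities is the delicate point, whereas the $\omega_{I,s}$ and $\omega_K$ contributions are comparatively immediate.
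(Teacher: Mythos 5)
Your proposal is correct and follows essentially the same route as the paper's proof: compute $F_{d_\theta}=2\pi\i\,(\partial_{x^i}\theta_j)\,dx^i\wedge dy^j$, note that $F_{d_\theta}\wedge\omega_{I,s}=0$ is automatic from the mixed form of the curvature, and extract from $\partial_{\bar w}q=0$ (together with $\tau=(\phi_{12}+\i)/\phi_{22}$ and $\det(\phi_{ij})=1$) the imaginary-part identity $\partial_{x^1}\theta_2=\partial_{x^2}\theta_1$ killing $F_{d_\theta}\wedge\omega_K$ and the real-part identity $\phi^{ij}\partial_{x^i}\theta_j=0$ killing $F_{d_\theta}\wedge\omega_J$. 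Your closing paragraph, identifying the three wedge conditions with anti-self-duality and hence with primitivity of type $(1,1)$ in each complex structure, only makes explicit what the paper leaves implicit in its final sentence.
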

\begin{proof}
As a first step we show
\be
\label{step1,1}
\phi^{ij}\frac{\pl}{\pl x^i}\theta_j =0
\ee
and
\be
\label{step1,2}
\frac{\pl}{\pl x^i}\theta_j=\frac{\pl}{\pl x^j}\theta_i.
\ee
To see this, because both $\tau$ and $q$ are holomorphic in the base, one can compute
\bea
0=\frac{\pl}{\pl\bar w}q&=&\frac12\left(\frac{\pl}{\pl x^1}-\bar \tau\frac{\pl}{\pl x^2}\right)(\theta_1-\tau \theta_2)\nonumber\\
&=&\frac12\left(\frac{\pl}{\pl x^1}\theta_1-\bar\tau\frac{\pl}{\pl x^2}\theta_1-\tau\frac{\pl}{\pl x^1}\theta_2+|\tau|^2\frac{\pl}{\pl x^2}\theta_2\right).\nonumber
\eea
Now, using \eqref{taudef}, the norm of $\tau$ is given by
\be
|\tau|^2=\frac{(1-\i\phi_{12})(1+\i\phi_{21})}{\phi_{22}^2}=\frac{1+\phi_{12}^2}{\phi_{22}^2}=\frac{\phi_{11}}{\phi_{22}},\nonumber
\ee
where for the last equality we used det$(\phi_{ij})=1$. Thus
\be
0=\frac1{2\phi_{22}}\left(\phi_{22}\frac{\pl}{\pl x^1}\theta_1+(\i-\phi_{12})\frac{\pl}{\pl x^2}\theta_1-(\i+\phi_{12})\frac{\pl}{\pl x^1}\theta_2+\phi_{11}\frac{\pl}{\pl x^2}\theta_2\right).\nonumber
\ee
Since both the real and imaginary parts vanish, \eqref{step1,1} and \eqref{step1,2} follow. In particular \eqref{step1,2} allows us to simplify our notation and denote $\frac{\pl}{\pl x^i}{\theta_j}$ as $\theta_{ij}$, where the indices commute.

The curvature of $\Xi_0$ is now given by
\be
F_{\Xi_0}={2\pi \i}\,\theta_{ij} dx^i\wedge dy^j.\nonumber
\ee
Right away it follows that $F_{\Xi_0}\wedge \omega_{I}=0$ for all $t$. Furthermore, \eqref{step1,1} implies $F_{\Xi_0}\wedge \omega_J=0$ and \eqref{step1,2} gives $F_{\Xi_0}\wedge \omega_{K}=0$. Thus $\Xi_0$ is a holomorphic and HYM with respect to each complex structure.

\end{proof}

We now turn to the general case. Assume $ V|_{E_x}\cong \oplus_{j=1}^n\mathcal O_E(q_j-0),$ with each $q_j$ is distinct. As before write $q_j=\theta_1^j-\tau\theta_2^j$, and construct diagonal matrices $\Theta_1$ and $\Theta_2$ with eigenvalues $\theta_1^j$ and $\theta_2^j$, respectively. Consider the connection 
\be
\label{reference connection}
{\Xi_0}=2\pi\i\left(\Theta_1dy^1+\Theta_2 dy^2\right),
\ee 
Its curvature is given by
\be
F_{\Xi_0}={2\pi \i}\,\Theta_{ij}dx^i\wedge dy^j.\nonumber
\ee
It is clear that $F_{\Xi_0}|_{E_x}=0$ for every fiber in $X_U$. Furthermore, by Proposition \ref{initialconnection} we have 
\be
F_{\Xi_0}\wedge \omega_{I}=F_{\Xi_0}\wedge \omega_J=F_{\Xi_0}\wedge \omega_K=0.\nonumber
\ee
Thus $\Xi_0$ is a local HYM connection with respect to each complex structure, although we only focus on $I$ in this paper.

Finally, we demonstrate that the lift of each point $q_i$ in $E_x$ to $\mathbb C$ is well defined. Recall that in the coordinates $(y^1,y^2)$, our lattice $\Lambda$ is the standard lattice given by $\langle 1,1\rangle$. Now, suppose we have another connection $\Psi$ satisfying
\be
 {\Xi_0}-\Psi=2\pi\i\left(M_1dy^1+M_2 dy^2\right),\nonumber
\ee
where $M_1=$diag$(\al_1,...,\al_n)$ and $M_2=$diag$(\b_1,...,\b_n)$ are both diagonal matrices of integers, which means both $\Xi_0$ and $\Psi$ define the same points on $E_x$.  If $u$ is the gauge transformation given by
\be
u={\rm diag}(e^{2\pi\i(\al_1 y^1+\b_1 y^2)},...,e^{2\pi\i(\al_n y^1+\b_n y^2)}),\nonumber
\ee
we have $ {\Xi_0}- \Psi=-duu^{-1}$. Furthermore because all the $\al_i$ and $\b_i$ are integers, $u$ descends to a smooth gauge transformation on the torus fibers, and thus the connections are gauge equivalent on $X_U$. Finally, since the points $q_i$ add up to $0$ in the group law on $E_x$, we can  find a lift to $\mathbb C$ where the points still add to $0$, and  $\Xi_0$ will be trace free.

\section{Bubbling}
\label{bubs}

We now present our bubbling argument, following the first two cases from \cite{DS}.  Consider a sequence of Hermitian-Yang-Mills connections $\Xi_i$, corresponding to $\omega_{t_i}\in[\omega_{\mathbb P^1}+t^2_{i}\omega_X]$ as $t_i\rightarrow0$. Currently our argument depends on the sequence of connections we choose, although one can hope that with further analysis the set where bubbles occur can be uniquely identified by $(V,\partial_{\Xi})$.

Choose a compact set $K\subset \mathbb P^1\backslash Z_\pi$, where as before $Z_\pi$ is the image of the singular fibers under $\pi$. In a neighborhood $U$ of any point $x\in K$, we can choose affine coordinates $(x^1,x^2)$ where $x$ is at the origin, and coordinates $(x^1,x^2,y^1,y^2)$ on $X_U:=\pi^{-1}(U)$.  
The curvature of $\Xi_i$ can   be decomposed as $$F_{\Xi_i} = F_{B_i} + F_{A_i} + \kappa_i,$$ where $F_{B_i}$ and $F_{A_i}$ denote the base and fiber directions of the curvature, and $\kappa_i$ denotes the mixed terms. For  each $x\in K$ we define the quantity
\be
m_i(x):=||F_{B_i}||_{L^\infty(E_x,H_0,g_{X})} + \frac{1}{t_i^2}||F_{A_i}||_{L^\infty(E_x,H_0,g_{X})}+||\kappa_i||^2_{L^\infty(E_x, H_0, g_{X})},\nonumber
\ee
where $g_{X}$ is the metric associated to the fixed K\"ahler form $\omega_{X}$.

\begin{prop}
\label{bubbling}
There exists a finite number of points $\{p_1,...,p_\ell\}\subset K$ such that for any compact set $K'\subset K\setminus\{p_1,\cdots,p_\ell\}$, $$\lim_{i\rightarrow \infty} t_i^2||m_i||_{L^\infty(K')} = 0.$$ In particular, for any $x\in K\setminus\{p_1,\cdots,p_\ell\}$, $$\lim_{i\rightarrow \infty}||F_{A_i}||_{L^\infty(E_x)} = 0.$$
\end{prop}
\begin{proof}
Let $x_i$ be a sequence of points in $K$ for which $t_i^2 m_i(x_i)$ does not approach zero. We will show  a finite amount of energy must bubble off along this sequence. Thus, by the total energy bound (Lemma \ref{YMcontrol}), there can only be a finite number of points in $K$ where bubbling occurs. 

The proof closely follows the arguments of \cite{Chen, DS, N1, N2} and is divided into two cases. The first case occurs when $t_i^2m_i(x_i)$ is unbounded, and the second when $t_i^2m_i(x_i)$ stays bounded above, yet is also bounded away from zero. 
Unless mentioned otherwise, all bundle norms in this section are with respect to  $H_0$, so  we suppress $H_0$ from our notation for simplicity.

\medskip
\noindent{\bf Case 1.}  $t_i^2 m_i(x_i)$ is unbounded.

\medskip

Given our sequence of points $x_i$, there exists corresponding points $a_i$ in $E_{x_i}$ where the supremum is obtained,  and without loss of generality we can assume $(x_i,a_i)\rightarrow (x_0,a_0)$. Let $D_{r}(x_i)$ denote a disc of radius $r$ in the metric $g_{\mathbb{P}^1}$ (corresponding to $\omega_{\mathbb P^1}$) in the base. We will show there exists a universal constant $\epsilon_0>0$, so that the inequality 
\be
\label{bub1}
\liminf_{i\rightarrow \infty}\int_{\pi^{-1}(D_r(x_i))}|F_{\Xi_i}|^2_{g_i}dV_{g_i}>\epsilon_0
\ee
holds for any small $r>0.$  Since the total energy is finite, a standard covering argument then shows that there can only be finitely many bubbles of this type.

Suppose \eqref{bub1} does not hold. Then there exists an $r_0$ so that for sufficiently large $i$,
\be
\label{contradictioncase1}
\int_{\pi^{-1}(D_{r_0}(x_i))}|F_{\Xi_i}|^2_{g_i}dV_{g_i}\leq \epsilon_0.\nonumber
\ee
It follows from \eqref{equiv} that there is a universal constant $c>0$, so that the $g_i$-geodesic ball $B_i :=B_{cr_0 {t_i}}(x_i,a_i)$ is contained in $\pi^{-1}(D_{r_0}(x_i))$. In particular we have the following bound 
\be\label{contradictioncase1.1}
\int_{B_i}|F_{\Xi_i}|^2_{g_i}dV_{g_i}\leq \epsilon_0.\nonumber
\ee

We now rescale our coordinates and metrics. Consider the coordinate change 
$$\lambda_i( x,y) = \Big({t_i}~(  x+x_i),y+a_i\Big),$$
 and let $\tilde \omega_i= t_i^{-2}\lambda_i^*\omega_i$. To compare this to the scaling  $L_{t_i}$,  defined in \eqref{fiberscale} in Section \ref{semiflat}, note that pulling the metric back by $L_{t_i}$ dilates the shrinking fibers, while $\tilde \omega_i$ is a combination of shrinking the base coordinates and then dilating the metric. Thus both scalings have the same effect, although with different coordinates.
 
By \eqref{equiv},  $\tilde \omega_i$ is uniformly equivalent to the Euclidean metric in the scaled coordinates, which we denote by $\tilde g_0$. Moreover the ball $B_i$ pulls back to a $\tilde g_i$-geodesic ball $\tilde B_i$, which contains a Euclidean ball $\tilde B.$ The ball $\tilde B$ can be chosen to have uniform size independent of $i$.

Now, if $\tilde \Xi_i$ is the pull-back connection to these new coordinates, then the HYM equation is again satisfied
\be
\label{HYMscaled}
F_{\tilde \Xi_i}\wedge \tilde\omega_i = 0\qquad{\rm and}\qquad (F_{\tilde\Xi})^{0,2}=0.
\ee
Change of variables, and the scale invariance of the Yang-Mills energy in dimension four, implies
\be
\label{L^2equiv}
 ||F_{\ti \Xi_i}||^2_{L^2(\ti B,\ti g_0)}\leq  ||F_{\ti \Xi_i}||^2_{L^2(\ti B_i,\ti g_i)}=  ||F_{ \Xi_i}||^2_{L^2(B_i, g_i)} \leq \epsilon_0.\nonumber
\ee
Since $\ti g_i$ is uniformly equivalent to  $\ti g_0$ on $\tilde B$ for large $i$, and $\ti\Xi_i$ satisfies \eqref{HYMscaled}, we can apply the standard $\epsilon$-regularity argument for Yang-Mills connections on a fixed ball $\ti B$  \cite[Theorem 4.8]{U1}. Thus, for $\epsilon_0$ small enough (depending only on the real dimension $4$ of X), the above $L^2$ control implies
\be
|F_{\ti\Xi_i}|^2_{\ti g_{i}}(0)\leq C,\nonumber
\ee
for some constant $C$ independent of $i$. Equivalence of metrics implies  control of $F_{\ti\Xi_i}$ with respect to    $\tilde g_0$, which in components gives the following bound
\be
|F_{\ti B_i}|_{\ti g_0}(0) +  |F_{\ti A_i}|_{\ti g_0}(0) + |\ti \kappa_i|^2_{\ti g_0}(0)\leq C.\nonumber
\ee
Scaling back, we see
\be
t_i^2|F_{B_i}|_{g_X}(0) + |F_{A_i}|_{g_X}(0) +t_i^2|\kappa_i|^2_{g_X}(0)\leq C.\nonumber
\ee
Hence we achieve control of $t_i^2 m_i(0)$, which we have assumed diverges, a contradiction. Let $W_1$ denote the set of points in $K$ at which bubbles of this type appear.

\medskip
\noindent{\bf Case 2.} $t_i^2m_i(x_i)$ is bounded above and away from zero.

\medskip

In this case an instanton on ${\mathbb C}\times E$ bubbles off. We follow the outline of \cite{N1,N2}, and use an energy quantization result of Wehrheim. 

Suppose $x_i\rightarrow x_0 \in K\backslash W_1$, and  let $D_{2\rho}(x_i)$ denote a disc of radius $2\rho$ in the metric $g_{\mathbb{P}^1}$ in the base. Suppose there exists constants $\delta$ and $\Lambda$ so that 
$$\delta<t_i^2 m_i(x_i)\leq\sup_{D_{2\rho}(x_i)}t_i^2 m_i<\Lambda.$$
The rightmost inequality holds since for large enough $i$ we can assume $D_{2\rho}(x_i)\subset K\backslash W_1$. By making $\rho$ smaller if necessary, we can furthermore assume that $\pi^{-1}(D_{2\rho}(x_i))$ is topologically a product between a ball in $\mathbb C$ and an elliptic curve $E$, although the complex structure may vary.

We preform the same scaling as in Case 1, and define $\tilde \omega_i= t_i^{-2}\lambda_i^*\omega_i$. Again this involves  rescaling  the metric and applying a   dilation.  The disk $D_{2\rho}(x_i)$ pulls back to $\ti D_{{2\rho}/{{t_i}}}(0)$, the geodesic disk with respect to the Euclidean metric $\ti g_0$ in the scaled coordinates. Starting from Proposition \ref{HTbound}, the arguments used in the proof of \cite[Theorem 1.1]{TZ0} (cf. pages 2936-2937)  give that $\tilde \omega_i$ converge sub-sequentially and smoothly to a limiting flat product metric $\omega_\infty$ on $\mathbb{C}\times E$.

Our sequence of scaled connections $\ti \Xi_i$ is defined on $\pi^{-1}(\ti D_{{2\rho}/{ {t_i}}}(0))$, and for any point $p\in\ti D_{{2\rho}/{ {t_i}}}(0)$, we have
 \begin{align*}
 |F_{\ti B_i}|_{\ti g_0}(p) + |F_{\ti A_i}|_{\ti g_0}(p)+ |\ti \kappa_i|^2_{ \ti g_0}(p)&=t_i^2 |F_{B_i}|_{ g_0}(p)+|F_{A_i}|_{g_0}(p)+t_i^2|\kappa_i|^2_{  g_0}(p)\\
 &<\sup_{D_{2\rho}(x_i)}t_i ^2m_i < \Lambda.
 \end{align*}
This implies $| F_{\ti\Xi_i}|_{\ti g_i}$ is uniformly bounded. By strong Uhlenbeck compactness \cite[Corollary 1.4 and Theorem 1.5]{U2}, this bound implies  there exists a subsequence of connections which converges smoothly, modulo unitary gauge transformations, to a limiting connection $\ti \Xi_\infty$ on the trivial $SU(n)$ bundle over $\mathbb{C}\times E$. The connection $\tilde\Xi_\infty$ will be ASD with respect to the limiting product metric $\omega_\infty$. Furthermore, by assumption, we have
$$|F_{\ti B_i}|_{\ti g_0}(0)+|F_{\ti A_i}|_{\ti g_0}(0)+ |\ti \kappa_i|^2_{ \ti g_0}(0)>\delta,$$
 and it follows that the limiting connection is not flat. An energy quantization result of Wehrheim  \cite[Remark 1.2]{W1} implies there exists a universal constant $\epsilon_0>0$ so that 
 $$\mathcal{YM}(\ti \Xi_\infty,\ti g_0)\geq 2\epsilon_0.$$
This implies that there exists an $R>0$, so that for $i$ sufficiently large, 
$$\epsilon_0<\int_{\ti D_{\frac R 2}(0)\times E}|F_{\ti\Xi_i}|^2_{\ti g_i}dV_{\ti g_i}=\int_{\pi^{-1}(D_{R{t_i}}(x_i))}|F_{\Xi_i}|^2_{g_i}dV_i.$$
Thus there can be only finitely many bubbles of this type, and denote the set of all such bubbles by $W_2$. This concludes the proof of Proposition \ref{bubbling}. 
\end{proof}

We conclude this section by noting that Proposition \ref{bubbling}, in conjunction with our convergence argument in Section \ref{convo}, implies that on a generic fiber, the connections $A_i$ will converge to a limiting flat connection $A_0$. The following section  is devoted to identifying this limiting flat connection explicitly.

\section{Gauge fixing over an elliptic curve}

\label{gaugefixingsection}

In this section we work on a fixed fiber of $\pi$, denoted $E$ for simplicity, satisfying  $V|_E\cong\bigoplus_{j=1}^n  \mathcal O_E(q_j-0)$ with each $q_j$ distinct and  $q_1+\cdots+q_n=0$. By the results of Section \ref{bundles}, if $V$ defines a spectral cover $D_V$ which is reduced and irreducible, this happens generically.

Equip $E$ with the fixed K\"ahler form $\omega_0=dy^1\wedge dy^2$ and let $g_0$ denote the corresponding metric. Recall that $E$ carries the complex coordinate $z=\tau y^1+y^2$. Denote the restriction  $ V|_E$ by $ V_0$. Since $V_0$ is of the form  $\bigoplus_{j=1}^n  \mathcal O_E(q_j-0)$,  it can be naturally identified with $E\times \mathbb C^n$, equipped with the complex structure
\be
\bar\partial_{A_0}:=\bar\partial-\frac{\pi\,Q}{{\rm Im}(\tau)}\,d\bar z,\nonumber
\ee 
where $Q$ is a diagonal matrix with entries $\ti q_i$ (recall $\ti q_i$  are the lifts of the points $q_i$ to $\mathbb C$). Let $H_0$ be the trivial metric on $\mathbb C^n$, and let $A_0$ be the Chern connection associated to $\bar\partial_{A_0}$ and $H_0$. Using \eqref{defofq}, in addition to $dz|_E =\tau dy^1+dy^2$, one can explicitly check that that $A_0=\Xi_0|_E$, where   $\Xi_0$ is given by \eqref{reference connection}.

Now,  given our sequence of connections $\Xi_i$ on $V$, we have the sequence of restricted connections $A_i:=\Xi_i|_E$ on $ V_0$. Again, because our sequence of connections arises from the Donaldson-Uhlenbeck-Yau Theorem, we know that  $A_i$ lies in the complexified gauge orbit of $A_0$, and thus $A_i$ and $A_0$ define isomorphic complex structures.  As a result, after   transforming $A_0$ by a unitary gauge transformation if necessary, we can write $A_i=e^{s_i}( A_0)$ for a trace free Hermitian endomorphism $s_i$. The main result of this section is:
\begin{thm}
\label{gaugef}
Let  $e^s(A_0)$ be a connection on $ V_0$ given by the action of a trace free Hermitian endomorphism $s.$ There exists constants $\epsilon_0>0$, and $C_0>0$, depending only on $g_0$,  $A_0,$ and $H_0$, so that the following holds. If the curvature of $e^s(A_0)$ satisfies
\be
\qquad ||F_{e^s(A_0)}||^2_{C^0(g_0, H_0)}\leq\epsilon_0,\nonumber
\ee 
then there exists another trace free Hermitian endomorphism $s'$ which satisfied both 
\be
e^s( A_0)=e^{s'} ( A_0)\qquad{\rm and}\qquad ||s'||_{C^0(g_0, H_0) }\leq C_0.\nonumber
\ee
\end{thm}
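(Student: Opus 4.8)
The plan is to exploit the explicit diagonal structure of the reference data. Fix the unitary frame in which $\E_0 \cong L_{q_1}\oplus\cdots\oplus L_{q_n}$ and $A_0$ is diagonal, and decompose any trace-free Hermitian endomorphism as $s = s_{\mathrm d} + s_{\mathrm o}$ into its diagonal and off-diagonal parts. The off-diagonal block in position $(i,j)$ is a section of $\mathrm{Hom}(L_{q_i},L_{q_j}) \cong L_{q_j - q_i}$, which by the genericity hypothesis (Definition \ref{gen}, distinct $q_i$) is a nontrivial degree-zero line bundle on $E$ and hence has no holomorphic sections, while the diagonal blocks are sections of $\cO$ whose only holomorphic sections are the constants. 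This dichotomy is the source of the Poincar\'e inequality: the Laplacian $\bar\partial_{A_0}\partial_{A_0}$ on $\mathrm{End}(\E_0)$ has a uniform spectral gap on the $L^2$-orthogonal complement of the constant diagonal endomorphisms, and its kernel consists precisely of the $A_0$-parallel (constant diagonal) endomorphisms, which form the stabilizer of $A_0$ under the action \eqref{complexifiedgauge}. The constant diagonal directions are exactly the flat directions along which $\|s\|_{C^0}$ cannot be controlled, and the whole point of passing to $s'$ is to eliminate them.

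The normalization is therefore as follows. Since $A_0 = \oplus L_{q_i}$ is polystable but not stable on $E$, a constant diagonal gauge transformation $w = \mathrm{diag}(w_1,\dots,w_n)$ lies in the stabilizer of $A_0$ and so leaves the connection $e^s_\dagger A_0$ unchanged; I would use this freedom to remove the constant (fiberwise average) mode of the diagonal part, producing $s'$ with $e^{s'}_\dagger A_0 = e^s_\dagger A_0$, trace-free, and with vanishing constant-diagonal component. This is the step that only makes sense fiberwise, since the averages are taken over $E$, and is the reason the method does not immediately give control in the base direction. It then remains to prove the a priori bound $\|s'\|_{C^0}\le C_0$ under the smallness assumption $\|F_{e^{s'}_\dagger A_0}\|^2_{C^0}\le \epsilon_0$.

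For this I would follow the Donaldson--Uhlenbeck--Yau $C^0$-estimate machinery, adapted to the polystable fiber. Writing $K = e^{2s'}$ for the corresponding metric on $(\E_0,\bar\partial_{A_0})$, Donaldson's pointwise inequality gives the weak bound $\Delta_{g_0}|s'| \le C\,|\Lambda_{\omega_0}F_{e^{s'}_\dagger A_0}| \le C\sqrt{\epsilon_0}$, so that $|s'|$ is essentially subharmonic. On the closed surface $E$ this alone yields no sup-bound, and the missing ingredient is an integral ($L^1$ or $L^2$) bound on $s'$; in the stable case this is where stability enters, but here $\E_0$ is only semistable and the bound can genuinely fail along the constant-diagonal directions. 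The key point is that, having normalized away precisely those directions, the integral bound should be recoverable from the Poincar\'e inequality together with the integrated Bochner identity controlling $\|\bar\partial_{A_0}\partial_{A_0}s'\|$ by $\|F\|$. Combining the $L^2$-bound with $\Delta_{g_0}|s'|\le C\sqrt{\epsilon_0}$ through a Moser iteration then gives $\|s'\|_{C^0}\le C(\|s'\|_{L^2}+\sqrt{\epsilon_0})\le C_0$.

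The main obstacle is exactly this a priori integral bound in the absence of fiber stability. Because $s'$ is not assumed small, the map $s'\mapsto F_{e^{s'}_\dagger A_0}$ is genuinely nonlinear and one cannot simply invert the linearized Poincar\'e inequality, so I expect to argue by contradiction and compactness: if some sequence $s'_k$ with $\|F\|_{C^0}\to 0$ had $\sup|s'_k|\to\infty$, then the rescalings $u_k = s'_k/\sup|s'_k|$ would, using $\Delta_{g_0}|u_k|\to 0$ and elliptic compactness on $E$, subconverge to a nonzero limit $u_\infty$ that is $\bar\partial_{A_0}\partial_{A_0}$-harmonic, hence a constant diagonal endomorphism by genericity --- contradicting the normalization, which forces the constant-diagonal part of each $u_k$, and therefore of $u_\infty$, to vanish. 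Making this rescaling argument interact correctly with the nonlinearity, so that the limit is genuinely harmonic, and verifying that the normalization survives the rescaling, is the technical heart of the theorem, and is precisely where the genericity assumption, through the Poincar\'e inequality, is indispensable.
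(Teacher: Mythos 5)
Your identification of where genericity enters is correct: the dichotomy between diagonal blocks (sections of $\cO$, hence constants) and off-diagonal blocks (sections of nontrivial degree-zero line bundles, hence zero) is exactly the paper's Poincar\'e inequality (Proposition \ref{poincare}), and Moser iteration on the differential inequality for $|s|^2$ is also how the paper passes from $L^2$ to $C^0$ bounds \eqref{Moser}. However, the two steps that actually carry your proof both fail. The first gap is the normalization itself. Acting by a constant diagonal stabilizer element $w$ preserves the connection via $e^s\mapsto e^s w$, but $e^s w$ is Hermitian only when $w$ commutes with $e^s$; in particular it is \emph{not} of the form $e^{s-\hat s}$ (with $\hat s$ the constant diagonal average) unless $[\hat s,s]=0$. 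More precisely, if $e^{s'}_\dagger A_0=e^{s}_\dagger A_0$ then $e^{-s'}e^{s}$ lies in the stabilizer of $A_0$, hence by genericity is a constant diagonal matrix $\Lambda$, and positivity of $e^s$ and $e^{s'}$ forces the entries of $\Lambda$ to be real and positive and forces $\Lambda$ to commute with $e^{s'}$, with $\lambda_i=\lambda_j$ whenever the $(i,j)$ entry of $e^{s'}$ is not identically zero. Consequently, whenever $e^s$ has no identically vanishing off-diagonal entries, $s$ is the \emph{unique} trace-free Hermitian representative of its connection, and no representative with vanishing constant-diagonal mode exists at all. So the stabilizer freedom cannot be used to define $s'$; producing any bounded representative is the entire content of Theorem \ref{gaugef}, not a preliminary reduction. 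The paper's corresponding statement, Lemma \ref{implicit function}, is purely local --- it applies only to connections $L^2$-close to the reference and comes from the implicit function theorem via Proposition \ref{poincare} --- and it is globalized by a completely different mechanism.

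The second gap is the blow-up/compactness argument, which you correctly flag as the heart of the matter but do not supply, and which in fact cannot work as sketched. The only integral identity available, \eqref{curvatures}, controls $\int_E|e^{-s}\partial_{A_0}e^{2s}|^2$ in terms of ${\rm Tr}(e^{2s}\,i\Lambda_{\omega_0}F)$, i.e.\ by $\|e^{2s}\|_{L^1}\|F\|_{C^0}$; when $\sup|s_k|\to\infty$ the weight $e^{2s_k}$ treats different eigendirections on wildly different scales, so this gives no bound on $\|\bar\partial_{A_0}(s_k/\sup|s_k|)\|_{L^2}$, and there is no reason the rescaled limit $u_\infty$ is annihilated by $\bar\partial_{A_0}$. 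This is exactly the degeneration occurring in the Uhlenbeck--Yau argument, where the normalized limit yields only weakly holomorphic subsheaves rather than a harmonic endomorphism --- and since $\E_0$ is strictly polystable, degree-zero subsheaves (the $L_{q_i}$ themselves) genuinely exist, so no contradiction can be extracted from this alone. The paper's substitute for this step is global and of a different nature: it runs the Kempf--Ness/Yang--Mills flow $g(t)$ with $g(0)=e^{s}$, uses R{\aa}de's theorem to obtain a flat limit $u_\infty^*A_0$, invokes convexity of the Donaldson functional (Georgoulas--Robbin--Salamon, via Trautwein) to bound in $L^2_2$ the relative endomorphism $\eta(t)$ between the flow line $g(t)$ and the constant flow line at $u_\infty$, and then runs a method of continuity \emph{backwards along the flow path}, propagating the existence of a $C^0$-bounded representative from a large time $T$ down to $t=0$ by combining Lemma \ref{implicit function}, Lemma \ref{connection estimate}, and \eqref{Moser} at each stage. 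Without some replacement for this global input --- the flow, the convexity estimate, and the continuity scheme --- your argument does not close.
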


Because $ V_0$ is poly-stable, and not stable, the above theorem is the best $C^0$ control that one can expect. For example, since $A_0$ is flat, if $e^s$ is a diagonal matrix of constants $c_1,..., c_n$, then $e^s ( A_0)$ will still be flat. However, one eigenvalue $c_i$ can be arbitrarily large while still preserving the condition det$(e^s)=1$ (recall that $e^s\in SL( V_0)$). Thus one can never expect $C^0$ control for $s$. The main idea of the above theorem is that, by a suitable choice of normalization, one can construct a related complex gauge transformation that yields the same connection, yet with the desired $C^0$ control.

We first demonstrate several preliminary results. For the remainder of the section, unless specified, all norms are taken with respect to the metrics $g_0$ and $H_0$, and we remove this from our notation for simplicity.

\begin{prop}
\label{poincare} Let $s$ be a trace free Hermitian endomorphism such that the diagonal entries of $s$ have zero average when integrated over $E$.  Then there exists a constant $C_p$, independent of $s$, so that
\be
\label{awesome1}
||s||_{L^2(E)}\leq C_p||\bar\partial_{A_0}s||_{L^2(E)}.
\ee
\end{prop}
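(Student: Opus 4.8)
The plan is to diagonalize the problem using the explicit diagonal form of $A_0$. Since $A_0$ is the Chern connection of $\bar\partial_{A_0}=\bar\partial-\frac{\pi Q}{{\rm Im}(\tau)}\,d\bar z$ with $Q={\rm diag}(\tilde q_1,\dots,\tilde q_n)$, the induced operator on ${\rm End}(\E_0)$ is $\bar\partial_{A_0}s=\bar\partial s-\frac{\pi}{{\rm Im}(\tau)}[Q,s]\,d\bar z$. Writing $s=(s_{ij})$ in the trivial frame, the $(i,j)$ entry satisfies $(\bar\partial_{A_0}s)_{ij}=\bar\partial s_{ij}-\frac{\pi(\tilde q_i-\tilde q_j)}{{\rm Im}(\tau)}\,s_{ij}\,d\bar z$, which is exactly the Dolbeault operator of the degree-zero line bundle $L_{q_i-q_j}:=L_{q_i}\otimes L_{q_j}^{-1}$. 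Because $H_0$ is the trivial metric in this frame, both $\|s\|^2_{L^2(E)}$ and $\|\bar\partial_{A_0}s\|^2_{L^2(E)}$ split as orthogonal sums $\sum_{i,j}\|s_{ij}\|^2_{L^2}$ and $\sum_{i,j}\|\bar\partial_{A_0}s_{ij}\|^2_{L^2}$, so it suffices to prove a uniform Poincar\'e inequality for each entry separately.

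For each entry I would invoke the same principle. The nonnegative self-adjoint elliptic operator $\bar\partial_{A_0}^*\bar\partial_{A_0}$ acting on sections of $L_{q_i-q_j}$ has discrete spectrum, and its kernel coincides with $\ker\bar\partial_{A_0}=H^0(E,L_{q_i-q_j})$; hence, whenever this kernel is trivial on the relevant subspace, the first eigenvalue $\lambda_{ij}$ is strictly positive and $\|s_{ij}\|^2_{L^2}\le\lambda_{ij}^{-1}\|\bar\partial_{A_0}s_{ij}\|^2_{L^2}$. For the off-diagonal entries $i\neq j$ the points $q_i$ are distinct, since $p$ lies away from the ramification locus $Z_C$; thus $L_{q_i-q_j}$ is a nontrivial line bundle of degree zero and admits no nonzero holomorphic sections, so $\ker\bar\partial_{A_0}=0$ and the estimate holds. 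For the diagonal entries $s_{ii}$ the operator is just the ordinary $\bar\partial$ on functions, whose kernel is the constants; here the hypothesis that the $s_{ii}$ have zero average removes the constants and again yields a positive first eigenvalue on the orthogonal complement. Setting $C_p^2:=\max_{i,j}\lambda_{ij}^{-1}$ and summing over all entries would give \eqref{awesome1}.

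The constant $C_p$ depends only on $g_0$, the points $q_i$, and hence on $A_0$ and $H_0$, and is manifestly independent of $s$, since the eigenvalues $\lambda_{ij}$ are fixed once the fiber $E=\pi^{-1}(p)$ is fixed. The substantive point, and the only place the hypotheses are genuinely needed, is the off-diagonal case: coercivity there is equivalent to the vanishing $H^0(E,L_{q_i-q_j})=0$, i.e. to $L_{q_i-q_j}$ being nontrivial, which is precisely the distinctness of the $q_i$. This is exactly the content of the genericity assumption (Definition \ref{gen}) evaluated away from $Z_C$; without it some off-diagonal entry could carry a nonzero holomorphic section and the inequality would fail, so this is the step I expect to be the crux. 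The zero-average normalization plays the analogous role on the diagonal, killing the constant sections that would otherwise obstruct coercivity.
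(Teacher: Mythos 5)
Your proof is correct, but it follows a genuinely different analytic route from the paper's. You exploit the diagonal form of $A_0$ to split ${\rm End}(\E_0)$ entrywise into the line bundles $L_{q_i}\otimes L_{q_j}^{-1}$ (off-diagonal) and the trivial bundle (diagonal), and then get the inequality directly from the spectral gap of $\bar\partial^*\bar\partial$ on each summand, with $C_p^2$ the largest inverse first eigenvalue; coercivity off the diagonal comes from $H^0(E,L_{q_i}\otimes L_{q_j}^{-1})=0$ for $q_i\neq q_j$, and on the diagonal from the zero-average normalization removing constants. The paper instead argues by contradiction and compactness: it normalizes a putative violating sequence $\tilde s_k$, uses the Hermitian hypothesis to convert the bound on $\bar\partial_{A_0}\tilde s_k$ into a full $L^2_1$ bound (via $|\bar\partial_{A_0}\tilde s_k|=|\partial_{A_0}\tilde s_k|$), extracts a weak $L^2_1$/strong $L^2$ limit $s_\infty$ with $\|s_\infty\|_{L^2}=1$ and $\bar\partial_{A_0}s_\infty=0$, and then kills $s_\infty$ using precisely the two vanishing facts you isolate: holomorphic diagonal entries are constants, hence vanish by the zero-average condition, and nonzero off-diagonal entries would give holomorphic maps between $L_{q_i}$ and $L_{q_j}$ for distinct points, which do not exist (the paper phrases this via the automorphism group of $\E_0$ being $n$-dimensional). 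So the algebraic heart is identical, but your argument replaces Rellich compactness with discreteness of the spectrum of line-bundle Laplacians. What your approach buys: it is quantitative ($C_p$ is identified as an inverse spectral gap, in principle computable on the flat torus in terms of ${\rm Im}(\tau)$ and the differences $\tilde q_i-\tilde q_j$), and it never uses that $s$ is Hermitian, so it proves the statement for arbitrary trace-free endomorphisms with average-free diagonal. What the paper's approach buys: it avoids setting up spectral theory on each line bundle, and its soft holomorphic-limit mechanism is reused almost verbatim in the closedness step of the continuity argument in Theorem \ref{gaugef}, so the contradiction proof doubles as a template there.
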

\begin{proof}
Note that diagonal entries of $s$ can always be defined  as the entries that preserve each subbundle  $\mathcal O_E(q_i-0)\subset V_0$.  Now, assume that the inequality does not hold. Then there exists a sequence of endomorphisms $s_k$ satisfying the assumptions of the proposition, along with the inequality 
\be
\int_E|\bar\partial_{A_0} s_k|^2\leq \frac1k\int_E |s_k|^2.\nonumber
\ee 
Let $\ti s_k:=s_k/||s_k||_{L^2(E)}$. Then
\be
\int_E|\bar\partial_{A_0} \ti s_k|^2\leq \frac1k.\nonumber
\ee
Because $s$ is Hermitian with respect to $H_0$, we have $|\bar\partial_{A_0} \ti s_k|=|\partial_{A_0} \ti s_k|$. This allows us to conclude that the sequence $\ti s_k$ converges weakly in $L^2_1$ (and strongly in $L^2$) to an endomorphism $s_\infty$ satisfying 
\be
||s_\infty||_{L^2(E)}=1\qquad{\rm and}\qquad ||\bar\partial_{A_0} s_\infty||_{L^2(E)}=0.\nonumber
\ee

Now, because $Q$ is diagonal, if $s_\infty$ has entries $(a_{ij})$, then the diagonal entries of $\bar\partial_{A_0} s_\infty$ are of the form $\bar\partial \,a_{ii}$. Since $||\bar\partial_{A_0} s_\infty||_{L^2}=0$, we see the diagonal entries of $s_\infty$ are constant. Furthermore, by assumption the diagonal entries of $\ti s_k$ have zero average, and by strong convergence in $L^2$ we conclude the diagonal entries of $s_\infty$ must also have zero average. Thus these entries vanish entirely. Now, because the points $q_i$ are distinct, the automorphism group of $ V_0$ is precisely $n$ dimensional \cite[Lemma 1.13]{FMW2}. Thus if the diagonal entries of $s_\infty$ vanish, $s_\infty$ must vanish entirely. In other words, if $s_\infty$ had any non-vanishing off diagonal entries, they would define a holomorphic map between line bundles $ \mathcal O_E(q_i-0)$ and $ \mathcal O_E(q_j-0)$ for distinct points $q_i$ and $q_j$, which is impossible. So $s_\infty=0$ yet $||s_\infty||_{L^2(E)}=1$, a contradiction. 
\end{proof}

Next consider  the following function spaces, equipped with the $L^2$ norm. 
\begin{defn}
Let ${\rm Herm}_0( V_0)$ be the space of trace free Hermitian endomorphisms of $ V_0$, and furthermore let ${\rm Herm}^\perp_0( V_0)$ denote the subspace consisting of those endomorphisms whose diagonal entries   have zero average on $E$.  
\end{defn}

Consider $\Upsilon(\cdot)\in {\rm End}({\mathfrak gl}( V_0))$, defined by
\be
\Upsilon(s)=\frac{e^{{\rm ad}_s}-1}{{\rm ad}_s}.\nonumber
\ee
From the definition of the complexified gauge action \eqref{complexifiedgauge}, we have
\be
\label{action2}
\partial_{e^{s}( A_0)}=\partial_{A_0}+e^{-s}(\partial_{A_0} e^{s})\qquad{\rm and}\qquad\bar\partial_{e^{s} ( A_0)}=\bar\partial_{A_0}-(\bar\partial_{A_0} e^{s})e^{-s}.
\ee
This allows one to compute
\be
e^{s} (A_0)=A_0+\Upsilon({-s})\partial_{A_0} s-\Upsilon({s})\bar\partial_{A_0} s\label{definitionofN}
\ee
(for instance, see \cite[Appendix A]{JW}).

Define the map ${\mathcal N(s)}:=e^{s} (A_0)$, which maps ${\rm Herm}_0( V_0)$ into the affine space of connections centered at $A_0$, equipped with the $L^2$ norm. Let ${\mathcal A}$ denote the image of the map $\mathcal N$. Using \eqref{definitionofN}, we see the derivative of $\mathcal N$ at $0$ is given by
\be
L(s):={\mathcal N}'(0)(s)=\partial_{A_0} s-\bar\partial_{A_0} s.\nonumber
\ee
The tangent space to ${\rm Herm}_0( V_0)$ is again ${\rm Herm}_0( V_0)$. Note that for any $s\in {\rm Herm}_0( V_0)$, if $\hat s$ is a diagonal matrix of constants given by the averaging the diagonals of $s$ over $E$, then $L(s)=L(s-\hat s)$, and so both ${\rm Herm}_0( V_0)$ and ${\rm Herm}^\perp_0( V_0)$ have the same image under $L$. Proposition \ref{poincare} shows that $L$ has trivial Kernel on ${\rm Herm}^\perp_0( V_0)$. Thus, not only can we conclude that the restriction of $L$ to ${\rm Herm}^\perp_0( V_0)$ is invertible, but  \eqref{awesome1} shows in addition that $L$ has bounded inverse. The contraction mapping principle implies there exists a small neighborhood ${\mathcal U}\subset {\mathcal A}$ of the connection $A_0$, and a set $\mathcal V\subset {\rm Herm}^\perp_0( V_0)$ in the tangent space to ${\rm Herm}_0( V_0)$ at $0$, so that $ {\mathcal V}\longrightarrow{\mathcal U}$ is a diffeomorphism onto its image. 

Summing up, we have proved the following:

\begin{lem}\label{implicit function}
There exist constants $\delta_0$ and $\Lambda_0$, which depend only on $A_0$, $H_0$ and $g_0$, so that the following holds. If $A\in \mathcal{A}$ satisfies $||A - A_{0}||_{L^2(E)}< \delta_0$, there exists $s\in {\rm Herm}_0^\perp$, such that $$A = e^{s} (A_{0}),$$ and $$||s||_{L^2(E) }\leq \Lambda_0||e^{s} (A_0)-A_0||_{L^2(E) }.$$
\end{lem}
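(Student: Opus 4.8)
The plan is to read the statement as the quantitative shadow of the inverse function theorem already set up in the paragraph preceding the lemma, and to extract the claimed $L^2$ bound from the coercivity of the linearization $L$ together with an absorption of the nonlinear terms. Recall from \eqref{definitionofN} that $\mathcal{N}(s)=e^s_\dagger A_0$ expands as
\[
\mathcal{N}(s)-A_0 = L(s)+R(s),\qquad L(s)=\partial_{A_0}s-\bar\partial_{A_0}s,
\]
where $R(s)=(\Upsilon(-s)-\mathrm{Id})\partial_{A_0}s-(\Upsilon(s)-\mathrm{Id})\bar\partial_{A_0}s$ collects the terms that are at least quadratic in $s$. The local diffeomorphism $\mathcal{V}\to\mathcal{U}$ already established supplies, for $\|A-A_0\|_{L^2(E)}<\delta_0$ with $\delta_0$ small, an $s\in{\rm Herm}_0^\perp(\E_0)$ with $\mathcal{N}(s)=A$ and $\|s\|_{C^0}$ as small as we wish; so the only thing left to prove is the estimate $\|s\|_{L^2}\le\Lambda_0\|\mathcal{N}(s)-A_0\|_{L^2}$.

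First I would make the coercivity of $L$ on ${\rm Herm}_0^\perp(\E_0)$ fully quantitative. Since $s$ is Hermitian we have $|\partial_{A_0}s|=|\bar\partial_{A_0}s|$ pointwise, and since $\partial_{A_0}s$ and $\bar\partial_{A_0}s$ are forms of complementary type they are pointwise orthogonal; hence
\[
\|L(s)\|_{L^2}^2=\|\partial_{A_0}s\|_{L^2}^2+\|\bar\partial_{A_0}s\|_{L^2}^2=2\|\bar\partial_{A_0}s\|_{L^2}^2 \ge \frac{2}{C_p^2}\,\|s\|_{L^2}^2,
\]
the last step being the Poincar\'e inequality \eqref{awesome1} of Proposition \ref{poincare}. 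This yields $\|s\|_{L^2}\le (C_p/\sqrt2)\,\|L(s)\|_{L^2}$, i.e. a bounded inverse for $L$ at the $L^2$ level.

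Next I would absorb the remainder. Using $\|\partial_{A_0}s\|_{L^2}+\|\bar\partial_{A_0}s\|_{L^2}=\sqrt2\,\|L(s)\|_{L^2}$ together with $\Upsilon(\pm s)-\mathrm{Id}=O(\|s\|_{C^0})$ (the power series for $\Upsilon$ has constant term $\mathrm{Id}$), one obtains $\|R(s)\|_{L^2}\le C\|s\|_{C^0}\|L(s)\|_{L^2}$. Writing $L(s)=(\mathcal{N}(s)-A_0)-R(s)$ and shrinking $\delta_0$ so that $C\|s\|_{C^0}\le\tfrac12$, this gives $\|L(s)\|_{L^2}\le 2\|\mathcal{N}(s)-A_0\|_{L^2}$, and combining with the coercivity bound produces $\|s\|_{L^2}\le \sqrt2\,C_p\,\|\mathcal{N}(s)-A_0\|_{L^2}$. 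Setting $\Lambda_0=\sqrt2\,C_p$ finishes the estimate.

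The hard part is not any single estimate but the reconciliation of function spaces. The map $\mathcal{N}$ is only manifestly $C^1$ between Sobolev spaces that control $C^0$, because the factors $\Upsilon(\pm s)$ must be bounded in $L^\infty$ to multiply the one-derivative-lower quantities $\partial_{A_0}s,\bar\partial_{A_0}s$ and remain in $L^2$. On the real surface $E$ this forces working with $s\in L^2_k$ for $k\ge2$, where $L^2_k\hookrightarrow C^0$ and the Banach-algebra property tames $R(s)$; the contraction-mapping step that supplies existence and $C^0$-smallness lives there. The delicate point is then to ensure that the purely $L^2$ hypothesis $\|A-A_0\|_{L^2}<\delta_0$ genuinely places $A$ in the chart $\mathcal{U}$ and yields a $C^0$-small $s$ — which relies on $L$ being first-order elliptic, so that inverting it gains a derivative and elliptic regularity upgrades $L^2$-control of $A-A_0$ to the higher control of $s$ needed to run the nonlinear absorption. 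Once that regularity bookkeeping is arranged, the clean $L^2\to L^2$ Lipschitz bound above is exactly the asserted estimate, and this lemma becomes the local input for Theorem \ref{gaugef}.
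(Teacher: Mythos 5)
Your proposal is correct and is essentially the paper's own argument: the paper likewise defines $\mathcal N(s)=e^s_\dagger A_0$, computes the linearization $L(s)=\partial_{A_0}s-\bar\partial_{A_0}s$, inverts $L$ on ${\rm Herm}_0^\perp(\E_0)$ via the Poincar\'e inequality \eqref{awesome1} of Proposition \ref{poincare}, and deduces the lemma from the contraction mapping principle, so that your explicit absorption of the quadratic remainder $R(s)$ is precisely the quantitative content of ``bounded inverse'' that the paper leaves implicit. The one caveat is your proposed resolution of the function-space mismatch --- first-order ellipticity upgrading $L^2$ control of $A-A_0$ to the $C^0$ control of $s$ needed for the absorption --- which does not quite close in real dimension two since $L^2_1\not\hookrightarrow C^0$; but the paper elides exactly this same point (it works formally with $L^2$ norms, on which $\mathcal N$ is not literally $C^1$), and in its application of the lemma within Theorem \ref{gaugef} the required $C^0$ bound on the gauge transformation is produced separately, by Moser iteration \eqref{Moser} from the curvature bound, rather than by this lemma.
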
 

We now turn to one final lemma. Consider the same constant $\delta_0>0$ from above, and let $C_0>0$ be a fixed constant, to be determined in the proof of Theorem \ref{gaugef}.
\begin{lem}
\label{connection estimate}
Let $s$ be a trace free Hermitian endomorphism, and $A$ a flat connection on $ V_0$. Given constants $\delta_0>0$ and $C_0>0$, there exists a constant $\epsilon_0$, depending only on $H_0$, $g_0$, $\delta_0$, and $C_0$, so that if $||s||_{C^0 }\leq C_0$ and $||F_{e^{s} (A)}||_{C^0(E)}<\epsilon_0$, then
\be
||e^{s} (A)-A||_{L^2(E)}<\frac{\delta_0}2.\nonumber
\ee
\end{lem}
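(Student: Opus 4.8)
The plan is to collapse the whole estimate onto a single scalar quantity, $\|\bar\partial_A s\|_{L^2(E)}$, and then to bound that quantity by the curvature through an integration by parts that exploits the flatness of $A$.

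First I would expand $e^s_\dagger A - A$ using \eqref{definitionofN} (which holds verbatim with $A_0$ replaced by the flat connection $A$):
\be
e^s_\dagger A - A = \Upsilon(-s)\,\partial_A s - \Upsilon(s)\,\bar\partial_A s.\nonumber
\ee
Since $\|s\|_{C^0}\le C_0$, the eigenvalues of ${\rm ad}_s$ all lie in $[-2C_0,2C_0]$, so $\Upsilon(\pm s)$ are bounded pointwise in operator norm by $C_1(C_0):=(e^{2C_0}-1)/(2C_0)$. Combined with the pointwise identity $|\partial_A s| = |\bar\partial_A s|$ valid for Hermitian $s$ (as used in the proof of Proposition \ref{poincare}), this yields
\be
\|e^s_\dagger A - A\|_{L^2(E)} \le 2\,C_1(C_0)\,\|\bar\partial_A s\|_{L^2(E)}.\nonumber
\ee
Hence it suffices to make $\|\bar\partial_A s\|_{L^2(E)}$ small.

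Next I would pair the curvature with $s$ and integrate over the closed surface $E$. Writing the $(1,0)$ and $(0,1)$ corrections as $a=\Upsilon(-s)\partial_A s$ and $b=-\Upsilon(s)\bar\partial_A s$, flatness of $A$ gives $F_{e^s_\dagger A}=\bar\partial_A a+\partial_A b+[a\wedge b]$. A direct integration by parts moves $\bar\partial_A,\partial_A$ off $a,b$ and onto $s$, and all of the resulting terms (including the commutator term) assemble into $\int_E \langle \Phi({\rm ad}_s)\,\bar\partial_A s,\bar\partial_A s\rangle\,dV_{g_0}$ for a positive symmetric operator-valued function $\Phi({\rm ad}_s)$ of the self-adjoint operator ${\rm ad}_s$; equivalently, this is the convexity of the Donaldson functional along $t\mapsto e^{ts}_\dagger A$, whose $t=0$ derivative vanishes precisely because $A$ is flat. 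Since the spectrum of ${\rm ad}_s$ is real and contained in $[-2C_0,2C_0]$, the weight $\Phi$ is bounded below by a constant $c(C_0)>0$ there, so
\be
\int_E {\rm Tr}\big(s\,\i\Lambda_{\omega_0} F_{e^s_\dagger A}\big)\,dV_{g_0} \ge c(C_0)\,\|\bar\partial_A s\|_{L^2(E)}^2.\nonumber
\ee
Estimating the left-hand side crudely by Cauchy--Schwarz and the hypotheses,
\be
\int_E {\rm Tr}\big(s\,\i\Lambda_{\omega_0} F_{e^s_\dagger A}\big)\,dV_{g_0} \le \|s\|_{C^0}\,\|\i\Lambda_{\omega_0}F_{e^s_\dagger A}\|_{C^0}\,{\rm Vol}(E,g_0) \le C\,C_0\,\epsilon_0,\nonumber
\ee
so $\|\bar\partial_A s\|_{L^2(E)}^2\le C\,C_0\,\epsilon_0/c(C_0)$, and combining with the first step $\|e^s_\dagger A-A\|_{L^2(E)}\le 2C_1(C_0)\big(C C_0\epsilon_0/c(C_0)\big)^{1/2}$. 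For $C_0$, $g_0$, $H_0$ fixed the right-hand side tends to $0$ as $\epsilon_0\to0$, so choosing $\epsilon_0=\epsilon_0(C_0,\delta_0,g_0,H_0)$ small enough makes it $<\delta_0/2$, as required.

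I expect the positivity in the middle step to be the main obstacle. The nonlinear contributions — from the $\Upsilon(\pm s)$ factors and from the commutator term $[a\wedge b]$ — are a priori of the same order as $\|\bar\partial_A s\|^2$, so a naive triangle-inequality bound would be worthless: one genuinely needs the correct sign in order to isolate $\|\bar\partial_A s\|^2$ on the left. It is exactly the flatness of $A$ that forces the linear term in the pairing to vanish and renders the quadratic form nonnegative, and the one quantitative point that must be checked with care is the $C_0$-dependence of the lower bound $c(C_0)$.
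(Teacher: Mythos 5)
Your proof is correct, but it reaches the crucial positivity by a different route than the paper. The paper also pairs the curvature against a function of $s$ and integrates over the closed fiber $E$, but with the weight $e^{2s}$ rather than $s$: tracing the identity $e^{-s}F_{e^{s}_\dagger A}e^{s}-F_A=\bar\partial_A\bigl(e^{-2s}\partial_A e^{2s}\bigr)$ against $e^{2s}$ gives the pointwise identity \eqref{curvatures}, namely $-\Delta_{g_0}{\rm Tr}(e^{2s})+|e^{-s}\partial_A e^{2s}|^2_{g_0}={\rm Tr}\bigl(e^{2s}\,\i\Lambda_{\omega_0}F_{e^{s}_\dagger A}\bigr)$, in which the quadratic term is a manifest pointwise square and the remaining term is an exact Laplacian of a scalar. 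Integrating over $E$ kills the Laplacian term, giving $\int_E|e^{-s}\partial_A e^{2s}|^2\le e^{C_0}\epsilon_0\,{\rm Vol}(E,g_0)$, and this quantity directly controls $\|e^{s}_\dagger A-A\|_{L^2(E)}$ because $\|s\|_{C^0}\le C_0$ together with $\det(e^s)=1$ bounds the eigenvalues of $e^s$ above and below; no operator calculus, no $\Upsilon$-expansion, and no appeal to $|\partial_A s|=|\bar\partial_A s|$ is needed. Your choice of the weight $s$ instead forces you to invoke the convexity of the Donaldson functional (the $\Phi({\rm ad}_s)$ second-variation identity of Donaldson--Simpson) to obtain $\int_E{\rm Tr}\bigl(s\,\i\Lambda_{\omega_0}F_{e^{s}_\dagger A}\bigr)\ge c(C_0)\|\bar\partial_A s\|^2_{L^2(E)}$, and then the expansion \eqref{definitionofN} to convert control of $\bar\partial_A s$ into control of the connection. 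Both of these steps are sound --- your lower bound is $c(C_0)=\min_{|x|\le 2C_0}(e^x-1)/x>0$, and any global sign ambiguity in the convention for $\i\Lambda F$ is harmless since you only use the pairing in absolute value --- and you correctly identified positivity as the crux rather than something a triangle inequality could deliver. The trade-off: the paper's weight $e^{2s}$ is exactly the choice that makes the convexity visible pointwise, collapsing the whole proof to one identity plus one integration, whereas your version is heavier machinery but yields, as a by-product, an explicit bound on $\|\bar\partial_A s\|_{L^2(E)}$ itself, which is more information than the lemma requires.
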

\begin{proof}
To begin, we see how the curvature of $A$ is related to the curvature of $e^{s} (A)$. Using \eqref{action2} one can compute
\be
e^{-s}F_{e^{s} (A)} e^{s}-F_{A}=\bar\partial_A(e^{-2s}\partial_A e^{2s})=e^{-2s}\left(\bar\partial_A\partial_A e^{2s}-\bar\partial_Ae^{2s} e^{-2s}\partial_A e^{2s}\right),\nonumber
\ee
which implies
\be
\label{curvatures}
-\Delta_{g_0}{\rm Tr}(e^{2s})+|e^{-s}\partial_A e^{2s}|^2_{g_0}={\rm Tr}(e^{2s}i\Lambda_{\omega_0} F_{e^{s} (A)}).
\ee
Integrating over $E$ yields
\be
\int_E |e^{-s}\partial_A e^{2s}|^2<e^{C_0}\epsilon_0.\nonumber
\ee
The $C_0$ bound for $s$, along with det$(e^s)=1$, demonstrates the eigenvalues of $e^s$ are bounded above and below. Thus the left hand side above controls the $L^2$ norm of the difference $e^{s}(A)-A$, and so
\be
||e^{s}(A)-A||_{L^2(E)}<C\epsilon_0.\nonumber
\ee
Here $C$ only depends on $C_0$, $g_0$, and $H_0$. Choose $\epsilon_0$ small  so $C\epsilon_0<\delta_0/2$.
\end{proof}
As we turn to the proof of Theorem \ref{gaugef}, recall that the constants $\delta_0$ and $\Lambda_0$ depend only on $A_0$, $H_0$ and $g_0$. Thus, by the above lemma, if we can show $C_0$ depends only on these quantities, $\epsilon_0$ will depend only on these quantities. 
\begin{proof}[Proof of Theorem \ref{gaugef}]

Our first task is to specify $C_0$. Fix an endomorphism $s\in{\rm Herm}_0( V_0)$ satisfying
\be
\qquad ||F_{e^s(A_0)}||^2_{C^0}\leq\epsilon_0.\nonumber
\ee 
Using this curvature bound, along with the inequality 
\be
-\Delta_{g_0}|s|^2\leq |s| |F_{e^{s} (A_0)}|\nonumber
\ee
(see for instance Proposition A.6 in \cite{JW}), we can apply Moser iteration to conclude
\be
\label{Moser}
\max\{|s|^2,1\}\leq C_1||s||_{L^2 }(1+\epsilon_0).
\ee
Here $C_1$ only depends on $g_0$ and $H_0$. We now set $C_0:= 2C_1\Lambda_0$. This shows $C_0$, and subsequently $\epsilon_0$, depends only on the initial setup.

The main idea of the proof is as follows. We construct a path of Hermitian endomorphisms, so that the curvature of the induced connections along this  path is bounded by $\epsilon_0$. We show the endpoint  of our path satisfies the conclusion of the theorem, and then apply a  method of continuity argument to conclude our desired result for $s$. Naively, one may first try to connect $e^{s}$ to $Id_{V_0}$ by the path $e^{ts}$ for $t\in[0,1]$. However, for arbitrary initial $s$ it is not clear that curvature stays bounded by $\epsilon_0$ along this path, which is an important for the argument. Instead, we follow the Yang-Mills flow.

Following Donaldson \cite[Section 1.1]{Don1}, we consider a path of complex gauge transformations $g(t)$, satisfying 
\be
\label{KNflow}
\dot g(t) g(t)^{-1}=-i\star F_{g(t) (A_0)}\qquad \qquad g(0)=e^{s}.\nonumber
\ee
On a Riemann surface, the above flow is referred to as the Kempf-Ness flow, and given a solution, the corresponding connections $A(t):=g(t) (A_0)$ solve the Yang-Mills heat flow:
\be
\dot A(t)=-d^*_{A(t)}F_{A(t)}\qquad\qquad A(0)=e^{s} (A_0).\nonumber
\ee
By a Theorem of R$\mathring{\rm a}$de \cite[Theorem 2]{R}, there exists a limiting Yang-Mills connection $A_\infty$ for which
\be
||A(t)-A_\infty||_{L^2_1}\leq c\,t^{-\beta}.\nonumber
\ee
Since $ V_0$ is polystable, the limit connection $A_\infty$ is also flat, and in the unitary gauge orbit of $A_0$, and so there exists a unitary gauge transformation $u_\infty$ for which $A_\infty=u_\infty^* A_0$.

Consider the trivial flow $u_\infty(t)=u_\infty$, which again satisfies the Kempf-Ness flow equation
\be
\dot u_\infty(t)u_\infty(t)^{-1}=-i\star F_{u_\infty(t) ( A_0)}=-i\star F_{A_\infty}=0.\nonumber
\ee
Define $\eta(t)\in {\rm Herm}_0( V_0)$, and a path of unitary gauge transformations $u(t)$, by the equation
\be
\label{relation}
u_\infty u(t)e^{\eta(t)}=g(t).
\ee
Thus $u(t)e^{\eta(t)}$ relates our two solutions of the Kempf-Ness flow. By Proposition 4.13 in \cite{Tra}, both $u(t)$ and $\eta(t)$ are bounded in $L^2_2$. In fact, this $L^2_2$ bound is proven following the general argument of \cite{GRS}. The authors demonstrate that   if $M$ is a complete, connected, simply connected Riemannian
manifold of nonpositive sectional curvature, which admits a function $\Phi:M\rightarrow \mathbb R$ which is convex along geodesics, then given two negative gradient flow lines of $\Phi$, the geodesic distance between these two flow lines stays bounded. In our case, the role of $M$ is taken by the space of Hermitian metrics, and the function $\Phi$ is Donaldson's functional (see \cite{Don1,GRS, Tra}, for a precise definition of $\Phi$).

 The bound on $\eta(t)$, along with convergence of $F_{A(t)}$ to zero in  $L^2$, allows us to conclude by Lemma \ref{connection estimate} that there exists a $T$ sufficiently large, so that
\be
||(u_\infty u(T) e^{\eta(T)}u(T)^{-1}u_\infty^{-1})\left( (u_\infty u(T))^*A_0\right)-(u_\infty u(T))^*A_0 ||_{L^2}\leq\frac{\delta_0}2.\nonumber
\ee
For simplicity we denote the fixed unitary gauge transformation $u_\infty u(T)$ by $u$, and define the path of Hermitian endomorphism $e^{\kappa(t)}$ by $u e^{\eta(t)}u^{-1}$. Then the above estimate can be written
\be
\label{largeT}
||e^{\kappa(T)} (u^*A_0)-u^*A_0 ||_{L^2}\leq\frac{\delta_0}2.
\ee
It is along the path $e^{\kappa(t)}$ that we can now apply our method of continuity argument.

Let $\ti A(t)$ be the path of connections given by $e^{\kappa(t)} (u^*A_0)$. Since $\ti A(t)=(u_\infty u(t)u^{-1})^*A(t)$, where $A(t)$ solves the Yang-Mills flow and $u(t)$ is given by \eqref{relation}, we conclude that $||F_{\ti A(t)}||_{C^0(E)}\leq\epsilon_0$ for all $t\in[0,T]$. This follows because the curvature is decreasing along the Yang-Mills flow, and the action of a unitary gauge transformation will not affect this norm. Also, the path $\ti A(t)$ is smooth for $t\in[0,T]$.

To set up the method of continuity, consider the set $I\subseteq [0,T]$ consisting of times $t$ for which there exists a  trace free Hermitian endomorphism $\kappa'(t)$ which satisfies both 
\be
\label{gauge1'}
e^{\kappa'(t)} (u^*A_0)=e^{\kappa(t)} (u^*A_0)
\ee
and 
\be
\label{gauge2'}
||\kappa'(t)||_{C^0(g_0,H_0)}\leq C_0.
\ee
We prove $I=[0,T]$. First, we demonstrate $T\in I$ to conclude $I$ is non-empty. By the estimate \eqref{largeT}, we can apply Lemma \ref{implicit function} to $\ti A(T)$, and conclude there exists a trace free Hermitian endomorphism $\kappa '(T)$ satisfying both \eqref{gauge1'} and an $L^2$ bound. By our Moser iteration bound this $L^2$ control can be improved to $C_0,$ and so $\kappa'(T)$ satisfies  \eqref{gauge2'} as well. Thus $T\in I$. We now need that $I$ is both open and closed with respect to the topology induced from the $C^2$ topology on Herm$( V_0)$. For the rest of the proof we use the notation $A:= u^*A_0$.

Our next step is to show $I$ is open. Let $t_0\in I$, and consider the corresponding endomorphism $e^{\kappa_0}$. Construct a small neighborhood of $e^{\kappa_0}$ with radius $\rho>0$, where $\rho$ is chosen so 
$$||e^{\kappa}-e^{\kappa_0}||_{C^2(E) }<\rho$$
implies 
$$||e^{\kappa} ( A) - e^{\kappa_0} ( A)||_{L^{2}(E) } < \delta_0/2.$$
Now, because $e^{\kappa_0}\in I$, there exists an endomorphism $e^{\kappa'_0}$ satisfying both $e^{\kappa_0} ( A)= e^{\kappa_0'} (A)$ and $||\kappa_0'||_{C^0 } \leq C_0.$ Given our choice of $\epsilon_0$, by Lemma \ref{connection estimate} we have
 $$||e^{\kappa_0}(A)- A||_{L^2 } =||e^{\kappa'_0}(A)- A||_{L^2 } < \delta_0/2.$$ 
By the triangle inequality  $||e^{\kappa} ( A_0) - A_{0}||_{L^2 } < \delta_0$, and  thus Lemma \ref{implicit function} implies there exists an endomorphism  $\kappa'\in {\rm Herm}_0^\perp$ such that $e^{\kappa} ( A)=e^{\kappa'} ( A)$ and 
 $$||\kappa'||_{L^2 }\leq \Lambda_0||e^{\kappa'} ( A)-A||_{L^2 }<\Lambda_0$$ 
(we assumed $\delta_0<1$). By our Moser iteration bound \eqref{Moser}, we conclude
 \be
 ||\kappa'||_{C^0 }<2\Lambda_0C_1 =C_0,\nonumber
 \ee
 which completes the proof of openness.

Finally we prove $I$ is closed. Let $t_i$ be a sequence of times in $I$ converging to $t$, and let $\kappa_i$ be the corresponding  sequence of endomorphisms converging to $ \kappa$ in the $C^2$ topology. For each $i$, there exists $ \kappa_i'$ which are uniformly bounded in $C^0$ and satisfy  $e^{\kappa_i} ( A)= e^{\kappa_i'} (A)$. The complexified gauge action gives
\be
e^{ \kappa _i}\circ\bar\partial_{ A}\circ e^{- \kappa_i}=e^{ \kappa_i'}\circ\bar\partial_{ A}\circ e^{- \kappa_i'},\nonumber
\ee
from which we conclude $\bar\partial_{A}(e^{- \kappa_i}e^{ \kappa_i'})=0$. Since $A=u^*A_0$, this implies $\bar\partial_{A_0}(u^{-1}e^{- \kappa_i}e^{ \kappa_i'}u)=0$. Since $A_0$ has only diagonal entries, we see the diagonal entries of  $u^{-1}e^{- \kappa_i}e^{ \kappa_i'}u$ must be constant. As before it then follows that the off diagonal terms must vanish, otherwise one would have a holomorphic map between line bundles $ \mathcal O_E(q_i-0)$ and $ \mathcal O_E(q_j-0)$ for distinct points $q_i$ and $q_j$. Taking the complex conjugate and using the fact that $\kappa_i$ and $\kappa_i'$ are Hermitian, we see that $ h_i:=u^{-1}e^{ \kappa_i'}e^{- \kappa_i}u$ will be a diagonal matrix of constants. 

Now, $C^2$ convergence of $ \kappa_i$ together with the $C^0$ control of $ \kappa_i'$ gives that the matrices $ h_i$ are uniformly bounded above and below. Since each $ h_i$ is a diagonal matrix of constants, after passing to a subsequence the $ h_i$ converge in $C^0$ to a limit $ h$. This allows us to define an endomorphism $e^{ \kappa'}:= uh u^{-1}e^{ \kappa}$, and by convergence of $ h_i$ and $ \kappa_i$ we have 
\be
e^{ \kappa_i'}= uh_iu^{-1} e^{ \kappa_i}\rightarrow e^{ \kappa'}\nonumber
\ee
 in $C^0$. Since $e^{\kappa_i'}$ is Hermitian, so is $e^{ \kappa'}$, and thus $ \kappa'$ is Hermitian and satisfies \eqref{gauge2'}. Furthermore, since for each $i$ we have $\bar\partial_A(u h_iu^{-1})=0$, we in fact have $ u h_iu^{-1} e^{ \kappa _i}\rightarrow e^{ \kappa'}$ in $C^1$, and as a result we conclude $e^{ \kappa'} ( A)= e^{ \kappa} (A)$. Thus \eqref{gauge1'} is also satisfied and $t\in I$. 

Thus $I$ is open, closed, and nonempty, and as a result $e^{\kappa(0)}\in I$. In particular, there exists a $\kappa'(0)$ satisfying  both $e^{\kappa(0)} (A)= e^{\kappa(0)'} (A)$ and $||\kappa(0)'||_{C^0 } \leq C_0.$ Now, define $s'$ by 
\be
e^{s'}=u^{-1}e^{\kappa(0)'}u.\nonumber
\ee
We see $s'$ satisfies the desired $C^0$ bound. Furthermore,
\bea
e^{s'} (A_0)=u^{-1}{}^*e^{\kappa(0)'} (u^*A_0)= u^{-1}{}^*e^{\kappa(0)} (u^*A_0)&=& e^{\eta(0)} ( A_0).\nonumber
\eea
Yet we started the flow \eqref{KNflow} at $g(0)=e^{s}$,  so $\eta(0)=s$. This completes the proof of the theorem.
\end{proof}

\section{Convergence}
\label{convo}
In this section we complete the proof of Theorem \ref{maintheorem}. As before, let $Z_\pi$ denote image of the singular fibers under $\pi$,  and $W_1$ and $W_2$ the bubbling sets for our sequence of connections. We fix a point $x\in \mathbb P^1\backslash (Z_\pi \cup W_1\cup W_2)$, and  denote the fiber over $x$ by  $E:=\pi^{-1}(x)$. We use the notation $A_i:=\Xi_i|_{E}$  and $A_0:=\Xi_0|_{E}$. As above equip $E$ with the fixed flat metric $\omega_0=dy^1\wedge dy^2$. Unless otherwise specified, in this section all norms are taken with respect to the metrics $g_0$ and $H_0$.

Recall our bubbling sequence at $x$ is defined by
\be
m_i(x):=||F_{B_i}||_{C^0(E,g_X)}+\frac{1}{t_i^2}||F_{A_i}||_{C^0(E,g_X)}+||\kappa_i||^2_{C^0(E, g_X)}.\nonumber
\ee
Since $x\notin W_1\cup W_2$, we have $t_i^2 m_i\rightarrow 0$ as $i\rightarrow\infty$, so $||F_{A_i}||_{C^0(E)}\rightarrow 0$. As mentioned at the end of Section \ref{bubs} (and as we shall see below), this is enough to prove that $A_i$ converges, along a subsequence and modulo gauge transformations, to a limiting flat connection. Our main result is identifying this limit.

Assume $V|_E= \mathcal O_E(q_1-0)\oplus\cdots\oplus \mathcal O_E(q_n-0)$ for $q_1,...,q_n$ distinct. Writing $A_i=e^{s_i}(A_0)$ for a sequence of Hermitian endomorphism $s_i$, for $i$ large enough we can apply Theorem \ref{gaugef}  to conclude there exists gauge transformations $s_i'$, which are uniformly bounded in $C^0$, and  satisfy $A_i=e^{s_i'}( A_0)$. Thus, as in the proof of Lemma \ref{connection estimate},
\be
\label{morse}
||A_{i}-A_0||_{L^2(E)}\leq C ||e^{-s'_i}\partial_{A_0}e^{2s'_i}||_{L^2(E)}\leq C||F_{A_i}||_{L^2(E)}\rightarrow 0.
\ee
 Furthermore, since $A_0$ is flat, we can integrate by parts and change the order of derivatives to conclude:
\bea
||\nabla^0(A_i-A_0)||_{L^2(E)}^2&=&\int_{E_p}{\rm Tr}(\nabla^0(e^{-2s_i} \nabla^0 e^{2s_i})\left(\nabla^0(e^{-2s_i}\nabla^0 e^{2s_i})\right)^*)\nonumber\\
&=&\int_{E_p}{\rm Tr}(\bar\nabla^0(e^{-2s_i} \nabla^0 e^{2s_i})\left(\bar\nabla^0(e^{-2s_i}\nabla^0 e^{2s_i})\right)^*)\nonumber\\
&=&||F_{A_i}||^2_{L^2(E)}\rightarrow0.\nonumber
\eea
Thus we have demonstrated 
\be
||A_{i}-A_0||_{L^2_1(E)}\rightarrow 0,\nonumber 
\ee
which is the stated convergence in  Theorem \ref{maintheorem}.

Next we prove smooth convergence, allowing for the action of unitary gauge transformations. Specifically, since $x$ is away from the bubbling set, there exists a small disk $D_\rho(x)$ that does not intersect $W_1\cup W_2$. We use the same coordinate transformations $\lambda_i$ in the proof Proposition \ref{bubbling} sending $x$ to the origin and scaling. Consider the rescaled metrics $\ti\omega_i=t_i^{-2}\lambda_i^*\omega_{t_i}$. The set $\pi^{-1}(D_\rho(x))$ rescales to a set topologically equivalent to $\ti D_{\frac{\rho}{{ t_i}}}(0)\times E$ (although the complex structure will not be a product). 

Our sequence of connections $\Xi_i$ pulls back to $\ti \Xi_i$, with fiber and base components 
\be
\ti A^i_j=A^i_j\qquad{\rm and}\qquad \ti B^i_j= { t_i} B^i_j.\nonumber
\ee
As before, the connections $\ti\Xi_i$ are HYM with respect to $\ti\omega_{ i}$, and each $\ti\omega_i$ is uniformly equivalent to the Euclidian metric for large $i$ by Proposition \ref{HTbound}. Also, since $D_\rho(x)$ is away from $W_1$, the function $ t_i^2m_i(x)$ is uniformly bounded above on the disk. This implies the curvature  $|F_{\ti \Xi_i}|$ is uniformly bounded on $\pi^{-1}(\ti D_{\frac{\rho}{ { t_i}}}(0))$.

Applying strong Uhlenbeck compactness  \cite[Corollary 1.4 and Theorem 1.5]{U2} on the fixed compact set 
\be
\pi^{-1}(\ti D_{1}(0))\subset \pi^{-1}(\ti D_{\frac{\rho}{{ t_i}}}(0))\nonumber,
\ee
 there exists a sequence of gauge transformations $u_i$ so that along a subsequence, $u_i^* \ti\Xi_i$ converges smoothly  to a limiting Yang-Mills connection $\ti \Xi_\infty$. Restricting our attention to the fiber  $E$ over the origin yields a sequence of connections $u_i^*\ti A_i$ which converges smoothly to a limiting flat  connection $\ti A_\infty$. Note that our fiber coordinates are not scaled, and the restriction of $\ti\omega_{ t_i}$ to $E$ is equivalent the standard metric $\omega_0=dy^1\wedge dy^2$. Thus, on $E$, we see $u_i^*A_i$ converges smoothly to a flat connection $A_\infty$. The connection $A_\infty$ may not equal $A_0$, but it will lie in the unitary gauge orbit.
 
We conclude by remarking that estimates of the form \eqref{morse} are common in these types of degeneration problems, for example see Proposition 3.1 in \cite{Fuk2} or Theorem 1 in \cite{F}. In our   estimate,   the curvature term is not raised to a power, and this holds because the specific form of our complex structure $V|_E= \oplus_{j=1}^n\mathcal O_E(q_j-0)$ implies that the Yang-Mills energy functional is Morse-Bott at $A_0$ (see Definition 7.5 in \cite{F}). Essentially,  the argument in our proof of Proposition \ref{poincare} gives that the kernel of the Hessian operator of the Yang-Mills energy functional can be identified with one forms valued in constant diagonal matrices, which also gives the tangent space to Yang-Mills connections at $A_0$. We direct the reader to \cite{F} for further details.

\newpage

\end{normalsize}

\medskip
\medskip
 
\end{document}